\renewcommand{\baselinestretch}{1.0}
\DeclareMathAlphabet{\mathcalligra}{T1}{calligra}{c}{h}
\providecommand{\U}[1]{\protect\rule{.1in}{.1in}}
\newtheorem{theorem}{Theorem}
\newtheorem{proposition}[theorem]{Proposition}
\newtheorem{corollary}[theorem]{Corollary}
\newtheorem{lemma}[theorem]{Lemma}
\newtheorem*{problem}{Problem}
\newtheorem{remark}[theorem]{Remark}
\newcommand{\Ric}{\mathrm{Ric}}
\newcommand{\tr}{\mathrm{trace}}
\newcommand{\sca}{\mathsf{scal}}
\newcommand{\scal}{\mathrm{scal}}
\newcommand{\R}{\mathbb{R}}
\newcommand{\Z}{\mathbb{Z}}
\newcommand{\F}{\mathbb{F}}
\newcommand{\Vol}{\mathrm{Vol}}
\newcommand{\W}{\mathcal{W}}
\newcommand{\CU}{\mathcal{U}}
\newcommand{\Hom}{\mathrm{Hom}}
\newcommand\Sp{\mathbb{S}}
\newcommand\Sh{\mathcal{L}}
\newcommand\bea{\begin{eqnarray*}}
\newcommand\eea{\end{eqnarray*}}
\newcommand\be{\begin{equation}}
\newcommand\ee{\end{equation}}
\newcommand{\mb}{\mathbb}
\newcommand{\po}{{\hspace*{-1ex}}{\bf .  }}
\def\<{\langle}
\def\>{\rangle}
\newcommand\qed{\ifhmode\unskip\nobreak\fi\ifmmode\ifinner\else
\hskip5 pt \fi\fi\hbox{\hskip5 pt \vrule width4 pt height6 pt
depth1.5 pt \hskip 1pt }}
\begin{document}

\title{Topological obstructions for submanifolds in low codimension}
\author{Christos-Raent Onti and Theodoros Vlachos}
\date{}

\maketitle

\maketitle
\renewcommand{\thefootnote}{\fnsymbol{footnote}} 
\footnotetext{\emph{2010 Mathematics Subject Classification.} Primary 53C40, 53C20; Secondary 53C42.}     
\renewcommand{\thefootnote}{\arabic{footnote}} 

\renewcommand{\thefootnote}{\fnsymbol{footnote}} 
\footnotetext{\emph{Key Words and Phrases.} Curvature tensor, $L^{n/2}$-norm of curvature, 
Betti numbers, $\delta$-pinched immersions, 
flat billinear forms, Weyl tensor.}     
\renewcommand{\thefootnote}{\arabic{footnote}} 

\begin{abstract}
We prove integral curvature bounds in terms of the Betti numbers for 
compact submanifolds of the Euclidean space with low codimension. 
As an application, we obtain topological obstructions for $\delta$-pinched 
immersions. Furthermore, we obtain intrinsic obstructions
for minimal submanifolds in spheres with pinched second fundamental form.
\end{abstract}

\section{Introduction}

By the Nash's embedding theorem, every Riemannian manifold can
be isometrically immersed into a Euclidean space with sufficiently high 
codimension. On the other hand, there are results that impose restrictions on 
 isometric immersions with low codimension (cf. \cite{CK,Otsuki,JDM1,JDM2,JDM3,JDM4}). Most of 
these obstructions are pointwise conditions on the range of curvature. Here, we investigate 
obstructions for immersions with low codimension that involve total curvature. 
In particular, we are interested in the $L^{n/2}$-norm of the $(0,4)$-tensor 
$R-\big(\scal/n(n-1)\big)R_1$, where $R$ and $\scal$ denote the $(0,4)$-curvature 
tensor and the scalar curvature of the induced metric $g$ respectively, 
and $R_1=(1/2)g\varowedge g$, where $\varowedge$ 
stands for the Kulkarni-Nomizu product.
Shiohama and Xu \cite{SX} gave a lower bound in terms of the 
Betti numbers for compact hypersurfaces in the Euclidean space $\R^{n+1}$.
For higher codimension, they raised the following 

\begin{problem}\po
Let $M^n, n\geq 3,$ be a compact $n$-dimensional Riemannian manifold 
which admits an isometric immersion into $\R^{2n-1}$. Does there exist a constant 
$\varepsilon (n)$, depending only on $n$, such that if 
$$\int_{M^n}\Big\Vert R-\frac{\scal}{n(n-1)}R_1 \Big\Vert^{n/2}\  dM<\varepsilon(n)$$
then $M^n$ is homeomorphic to the sphere $\Sp^n$?
\end{problem}

In the present paper, we provide integral curvature bounds in terms of the 
Betti numbers, for compact submanifolds of Euclidean space with low 
codimension. As a consequence, we obtain partial answers 
to the above problem and extend previous ones given in \cite{Vlachos}. Throughout the paper, all manifolds under consideration 
are assumed to be without boundary, connected and oriented.
Our main result is stated as follows.

\begin{theorem}\po\label{MainTh2}
Given an integer $n\geq 4$ and $\delta\in(1/n,1)$, there exists a positive 
constant $c(n,\delta)$ such that if $M^n$ is a compact 
$n$-dimensional Riemannian manifold that admits an isometric immersion 
$f$ in $\R^{n+k},2\leq k\leq n/2,$ then 
$$
\int_{M^n}\Big\Vert R-\frac{\scal}{n(n-1)}R_1 \Big\Vert^{n/2}\  dM
+\int_{M^n} \big(S-\delta n^2H^2\big)_+^{n/2}\ dM
 \geq c(n,\delta)\sum_{i=k}^{n-k}\beta_i(M^n;\F),
$$
where $S$ is the squared norm of second fundamental form, 
$H$ the mean curvature\footnote{The mean curvature is given by $H=|\mathscr{H}|$, where $\mathscr{H}$ 
denotes the mean curvature vector.} of $f$, 
$\big(S-\delta n^2H^2\big)_+=\max\{S-\delta n^2H^2,0\}$ and $\beta_i(M^n;\F)$ the $i$-th 
Betti number of $M^n$ over an arbitrary coefficient field $\F$.
Furthermore,
\begin{enumerate}[(i)]
  \item If 
\begin{equation}\label{MIneq4}
\int_{M^n}\Big\Vert R-\frac{\scal}{n(n-1)}R_1 \Big\Vert^{n/2} \  dM+
\int_{M^n} \big(S-\delta n^2H^2\big)_+^{n/2}\ dM
< c(n,\delta),
\end{equation}
then $M^n$ has the homotopy type of a CW-complex with no cells of 
dimension $i$ for $k\leq i\leq n-k$. Moreover, if $k=2$, then the fundamental 
group $\pi_1(M^n)$ is a free group on $\beta_1(M^n;\Z)$ generators and if 
$\pi_1(M^n)$ is finite then $M^n$ is homeomorphic to $\Sp^n$.
  \item If the scalar curvature of $M^n$ is everywhere non-positive, then
  $$
  \int_{M^n}\Big\Vert R-\frac{\scal}{n(n-1)}R_1 \Big\Vert^{n/2}\  dM
  +\int_{M^n} \big(S-\delta n^2H^2\big)^{n/2}\ dM 
\geq c(n,\delta)\sum_{i=0}^{n}\beta_i(M^n;\F).
$$
  \item If the scalar curvature is everywhere non-positive and $$\int_{M^n}\Big\Vert R-\frac{\scal}{n(n-1)}R_1 \Big\Vert^{n/2}\  dM
  +\int_{M^n} \big(S-\delta n^2H^2\big)^{n/2}\ dM 
  < 3 c(n,\delta)$$ then 
  $M^n$ is homeomorphic to $\Sp^n$.
\end{enumerate}
\end{theorem}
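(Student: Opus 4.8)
\noindent\emph{Proof strategy.} The argument combines Morse theory for height functions with a sharp pointwise estimate for the second fundamental form, in the spirit of \cite{SX} and \cite{Vlachos}.

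\emph{Step 1 (Morse theory and integral geometry).} For a unit vector $v\in\Sp^{n+k-1}$ let $h_v=\langle f,v\rangle$. For generic $v$ this is a Morse function on $M^n$; its critical points are exactly the points $x$ at which $v$ is normal to $f$, and the Morse index there equals $\operatorname{ind}A_\xi$, the number of positive eigenvalues of the shape operator $A_\xi$ (with $\xi=v$) at $x$. By the Morse inequalities over $\F$ one has $\beta_i(M^n;\F)\le\mu_i(v)$, where $\mu_i(v)$ counts the critical points of index $i$. Applying the area formula to the Gauss map $\nu^1M^n\to\Sp^{n+k-1}$ of the unit normal bundle (its Jacobian at $(x,\xi)$ equals $|\det A_\xi|$) and integrating over $v\in\Sp^{n+k-1}$ gives
$$
\sum_{i=k}^{n-k}\beta_i(M^n;\F)\le\frac{1}{\Vol(\Sp^{n+k-1})}\int_{M^n}\Phi\,dM,\qquad
\Phi(x):=\int_{\Sp^{k-1}_x}|\det A_\xi|\,\mathbf 1_{\{k\le\operatorname{ind}A_\xi\le n-k\}}\,d\xi,
$$
the fibre integral being taken with respect to the round measure. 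Hence the main inequality reduces to the pointwise estimate $\Phi\le c'(n,\delta)\big(\big\|R-\tfrac{\scal}{n(n-1)}R_1\big\|^{n/2}+(S-\delta n^2H^2)_+^{n/2}\big)$, after which one sets $c(n,\delta):=\Vol(\Sp^{n+k-1})/c'(n,\delta)$ and takes the minimum over the finitely many admissible values of $k$.

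\emph{Step 2 (the pointwise estimate).} Let $\mathring\alpha$ denote the traceless part of the second fundamental form at $x$, so that $\|\mathring\alpha\|^2=S-nH^2$. An elementary remark is that if $A_\xi$ is nonsingular with $k\le\operatorname{ind}A_\xi\le n-k$, then it has at least $k$ positive and at least $k$ negative eigenvalues; writing each eigenvalue as $\langle\mathscr H,\xi\rangle+\mu_j$ with $\sum_j\mu_j=0$ and $|\mu_j|\le\|\mathring\alpha\|$, the presence of both signs forces $|\langle\mathscr H,\xi\rangle|\le\max_j|\mu_j|$, so that every eigenvalue has modulus $<2\|\mathring\alpha\|$ and $|\det A_\xi|<(2\|\mathring\alpha\|)^n$; in particular $\Phi$ vanishes at umbilical points. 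To bring in the curvature one works in an eigenbasis of a shape operator $A_\xi$ with $\operatorname{ind}A_\xi\in[k,n-k]$ and uses the Gauss equation to write each product of two eigenvalues in terms of a sectional curvature of $M^n$ — hence $\tfrac{\scal}{n(n-1)}$ up to an error controlled by $\big\|R-\tfrac{\scal}{n(n-1)}R_1\big\|$ — and of the Gauss curvature tensor of the component of $\alpha$ orthogonal to $\xi$, a symmetric bilinear form taking values in a space of dimension $k-1\le n/2-1<n-2$. Moore's lemma on flat bilinear forms forbids such a form from realizing constant negative curvature and, more generally, forces the requisite degeneracy; combining this with the pinching hypothesis $S<\delta n^2H^2$ — which, because $\delta>1/n$, keeps $\|\mathring\alpha\|$ small relative to $H$ — and with a homogeneity and compactness argument (both sides of the inequality are continuous and homogeneous of degree $n$ in $\alpha$), one obtains the required pointwise bound. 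This step, and in particular the passage from the qualitative fact ``constant curvature in codimension $\le n/2$ leaves no shape operator of index in $[k,n-k]$ with nonvanishing determinant'' to a quantitative inequality valid uniformly over all immersions, is the main obstacle of the proof; it is here that flat bilinear forms, the bound $k\le n/2$, and the parameter $\delta$ all intervene essentially.

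\emph{Step 3 (the additional assertions).} For (i), hypothesis \eqref{MIneq4} forces $\int_{\Sp^{n+k-1}}\sum_{i=k}^{n-k}\mu_i(v)\,dv<\Vol(\Sp^{n+k-1})$, so there is a Morse function $h_v$ with no critical point of index $i$ for $k\le i\le n-k$, and standard Morse theory yields the asserted CW structure. If $k=2$ this complex has no $2$-cells, hence $\pi_1(M^n)$ equals the fundamental group of its $1$-skeleton, a free group of rank $\beta_1(M^n;\Z)$; if moreover $\pi_1(M^n)$ is finite it is trivial, so $M^n$ is a simply connected closed manifold with vanishing homology in degrees $1,\dots,n-2$, whence by Poincar\'e duality it has the homology of $\Sp^n$ and is therefore homeomorphic to $\Sp^n$ by the topological Poincar\'e conjecture ($n\ge4$). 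For (ii), $\scal\le0$ is equivalent to $S\ge n^2H^2$, so $(S-\delta n^2H^2)_+=S-\delta n^2H^2$ and $H^2\le\big(n^2(1-\delta)\big)^{-1}(S-\delta n^2H^2)$; combined with $|\det A_\xi|\le(S/n)^{n/2}$ and $S\le(1-\delta)^{-1}(S-\delta n^2H^2)$ this gives $\int_{\Sp^{k-1}_x}|\det A_\xi|\,d\xi\le C(n,\delta)(S-\delta n^2H^2)^{n/2}$, and the Chern--Lashof inequality $\sum_{i=0}^n\beta_i(M^n;\F)\le\Vol(\Sp^{n+k-1})^{-1}\int_{M^n}\big(\int_{\Sp^{k-1}_x}|\det A_\xi|\,d\xi\big)dM$ yields the claim. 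Finally (iii) follows as in (i): the strict bound $<3c(n,\delta)$ makes the average over $v$ of the total number of critical points of $h_v$ less than $3$, hence equal to $2$ for some $v$, and a closed manifold admitting a Morse function with two critical points is homeomorphic to $\Sp^n$ by Reeb's theorem.
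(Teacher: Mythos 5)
Your Step~1 reproduces the paper's framework exactly: height functions, the Shiohama--Xu identity relating $\int_{U^iN_f}|\det A_\xi|$ to $\int\mu_i(u)$, the Morse inequalities $\mu_i\ge\beta_i$, and the Chern--Lashof theorem. Your Step~3 is likewise essentially the paper's Morse-theoretic conclusion in (i) and (iii); for (ii) you substitute a more elementary AM--GM bound $\int_{\Sp^{k-1}_x}|\det A_\xi|\,d\xi\le C(n,\delta)(S-\delta n^2H^2)^{n/2}$, which works but produces an a priori different constant, whereas the paper deduces (ii) with the \emph{same} constant by letting the algebraic proposition integrate over all of $\Sp^{k-1}$ when $\scal\le 0$. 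That is a minor variation and could be reconciled by taking the minimum of the two constants.

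The genuine gap is Step~2. The inequality
$$\int_{\Sp^{k-1}_x}|\det A_\xi|\,\mathbf{1}_{\{k\le\operatorname{ind}A_\xi\le n-k\}}\,d\xi\le c'(n,\delta)\Bigl(\bigl\|R-\tfrac{\scal}{n(n-1)}R_1\bigr\|^{n/2}+(S-\delta n^2H^2)_+^{n/2}\Bigr)$$
is precisely the content of the paper's Proposition~2 (the heart of the proof), and you do not establish it. You describe a strategy --- eigenvalue decomposition of $A_\xi$, Gauss equation, a Moore-type degeneracy statement, and a ``homogeneity and compactness argument'' --- but you explicitly call this ``the main obstacle'' and stop. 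The compactness argument is nontrivial: one must show that a minimizing sequence $\beta_m$ on the level set $\{\psi=1\}$ is \emph{bounded}, which the paper achieves by combining the degree-$4$ homogeneity of the left-hand side with the structure given by the algebraic lemma, and then a careful sequence of eigenvalue/index comparisons to rule out $\phi_\lambda(\beta)=0$ at the limit; neither point is automatic. Moreover, the algebraic input you cite is not quite the right one: Moore's flat-bilinear-form lemma (Lemma~\ref{JDMLe}, applied via Lemma~\ref{JDMLe2}) is what the paper uses for the \emph{Weyl} tensor version (Theorem~\ref{MainTh}), whereas the present theorem rests on Lemma~\ref{ThVLe} from \cite{Vlachos}, which treats the hypothesis $\beta\varowedge\beta=\mu\,\langle\cdot,\cdot\rangle\varowedge\langle\cdot,\cdot\rangle$ rather than flatness. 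Until Proposition~2 is proved, the argument is incomplete.
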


In the case where (\ref{MIneq4}) is satisfied, the homology groups of $M^n$
must satisfy the condition $H_i(M^n;\F)=0$ for all $k\leq i\leq n-k$, where $\F$ is any 
coefficient field.

The idea of the proof is to relate the $L^{n/2}$-norm of the tensor 
$R-\big(\scal/n(n-1)\big)R_1$ with the Betti numbers using Morse theory, 
Chern-Lashof results \cite{CL1,CL2} and the Gauss equation. To this aim we
prove an algebraic inequality for symmetric bilinear forms (see Prop. \ref{CrProp2}). The presence of the 
integral $\int_{M^n} \big(S-\delta n^2H^2\big)_+^{n/2}\ dM$ in Theorem \ref{MainTh2} 
is essential since the algebraic inequality fails 
by dropping the corresponding term.

The above integral 
measures how far an immersion deviates from being {\it $\delta$-pinched}.
The latter means that the inequality $S\leq \delta n^2H^2$
holds everywhere, in which case $\delta\geq 1/n$.
We note that Shiohama and Xu \cite{ShXu1,ShXu2} gave a topological 
lower bound of the above integral in the case where $\delta=1/n$. 
The geometry and the topology of $\delta$-pinched immersions have been studied by several authors 
(see \cite{Knut,Andrews,BYChen,Cheng,ChengNona,Ar,Barbosa}) in the case where $\delta=1/(n-1)$.
Our results provide information on $\delta$-pinched immersions for any 
$\delta\in(1/n,1)$. Indeed, the following corollary follows immediately from Theorem \ref{MainTh2}
and gives an intrinsic obstruction to $\delta$-pinched immersions.

\begin{corollary}\po
If a compact $n$-dimensional Riemannian manifold $M^n, n\geq 4,$ 
admits an isometric $\delta$-pinched
immersion in $\R^{n+k},2\leq k\leq n/2,$ for some $\delta\in(1/n,1)$, then 
$$
\int_{M^n}\Big\Vert R-\frac{\scal}{n(n-1)}R_1 \Big\Vert^{n/2}\  dM
\geq c(n,\delta)\sum_{i=k}^{n-k}\beta_i(M^n;\F).
$$
In particular, if 
$$\int_{M^n}\Big\Vert R-\frac{\scal}{n(n-1)}R_1 \Big\Vert^{n/2}\  dM
< c(n,\delta),$$ then $M^n$ has the homotopy type of a 
CW-complex with no cells of dimension $i$ for $k\leq i\leq n-k$. 
Moreover, if $k=2$, then $\pi_1(M^n)$ is a free group on $\beta_1(M^n;\Z)$ generators and if 
$\pi_1(M^n)$ is finite then $M^n$ is homeomorphic to $\Sp^n$.
\end{corollary}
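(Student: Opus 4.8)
The plan is to derive everything directly from Theorem \ref{MainTh2} by observing that the $\delta$-pinching hypothesis forces the second integral in that theorem to vanish. First I would recall that, by definition, an isometric immersion $f\colon M^n\to\R^{n+k}$ is \emph{$\delta$-pinched} precisely when $S\le\delta n^2H^2$ holds at every point of $M^n$; consequently $\big(S-\delta n^2H^2\big)_+=\max\{S-\delta n^2H^2,0\}\equiv 0$ on $M^n$, and therefore $\int_{M^n}\big(S-\delta n^2H^2\big)_+^{n/2}\,dM=0$.

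With this observation in hand, I would simply substitute into the main inequality of Theorem \ref{MainTh2}. Since $n\ge 4$, $2\le k\le n/2$ and $\delta\in(1/n,1)$, the hypotheses of Theorem \ref{MainTh2} are met with the very same $n$ and $\delta$, and the inequality
$$\int_{M^n}\Big\Vert R-\frac{\scal}{n(n-1)}R_1\Big\Vert^{n/2}\,dM+\int_{M^n}\big(S-\delta n^2H^2\big)_+^{n/2}\,dM\ge c(n,\delta)\sum_{i=k}^{n-k}\beta_i(M^n;\F)$$
collapses, upon discarding the vanishing term, to the asserted curvature lower bound.

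For the topological conclusion I would note that if $\int_{M^n}\big\Vert R-(\scal/n(n-1))R_1\big\Vert^{n/2}\,dM<c(n,\delta)$, then — again because the pinching term contributes nothing — inequality (\ref{MIneq4}) of Theorem \ref{MainTh2} is satisfied, so part (i) of that theorem applies verbatim: $M^n$ has the homotopy type of a CW-complex with no cells of dimension $i$ for $k\le i\le n-k$; moreover, when $k=2$ the fundamental group $\pi_1(M^n)$ is a free group on $\beta_1(M^n;\Z)$ generators, and if in addition $\pi_1(M^n)$ is finite, then $M^n$ is homeomorphic to $\Sp^n$.

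I do not anticipate any genuine obstacle here: the whole content of the corollary is already packaged in Theorem \ref{MainTh2}, and the only point requiring (minimal) care is to make explicit that $\delta$-pinching is exactly the condition annihilating $\big(S-\delta n^2H^2\big)_+$, so that both the integral estimate and the applicability of (\ref{MIneq4}) are immediate specializations of the main theorem.
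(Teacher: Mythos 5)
Your proof is correct and follows exactly the route the paper intends: the paper states that this corollary ``follows immediately from Theorem \ref{MainTh2}'' and offers no separate argument, and your observation that $\delta$-pinching annihilates $\big(S-\delta n^2H^2\big)_+$ pointwise, making the pinching integral vanish, is precisely the specialization the authors have in mind.
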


The next results are easy consequences of Theorem \ref{MainTh2}
and provide partial answers to the problem raised by Shiohama and Xu.

\begin{corollary}\po\label{pinched3}
If a compact $n$-dimensional Riemannian manifold 
$M^n$, $n\geq 4,$ admits an isometric immersion $f$ in 
$\R^{n+k},2\leq k\leq n/2,$ such that 
$$
\int_{M^n}\Big\Vert R-\frac{\scal}{n(n-1)}R_1 \Big\Vert^{n/2}\  dM
< \lambda\  c(n,\delta)
$$
and
$$
\int_{M^n} \big(S-\delta n^2H^2\big)_+^{n/2}\ 
dM\leq (1-\lambda)\  c(n,\delta)\sum_{i=k}^{n-k}\beta_i(M^n;\F),
$$
where $\lambda\in(0,1)$ and $\delta\in(1/n,1)$, then $f$ is $\delta$-pinched 
and $M^n$ has the homotopy type of a CW-complex with no cells of 
dimension $i$ for $k\leq i\leq n-k$. 
Furthermore,
\begin{enumerate}[(i)]
  \item If $k=2$, then $\pi_1(M^n)$ is a free group on 
$\beta_1(M^n;\Z)$ generators and if 
$\pi_1(M^n)$ is finite then $M^n$ is homeomorphic to $\Sp^n$.
  \item If the mean curvature is everywhere positive and 
$\delta=1/(n-1)$, then $M^n$ is diffeomorphic to $\Sp^{n}$. 
\end{enumerate}
\end{corollary}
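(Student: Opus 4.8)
The plan is to deduce the whole statement from Theorem~\ref{MainTh2} by playing the two integral hypotheses off against the lower bound that theorem provides. Put $c=c(n,\delta)$ and $B=\sum_{i=k}^{n-k}\beta_i(M^n;\F)$. Adding the two hypotheses bounds $\int_{M^n}\Vert R-(\scal/n(n-1))R_1\Vert^{n/2}dM+\int_{M^n}(S-\delta n^2H^2)_+^{n/2}dM$ above by $\lambda c+(1-\lambda)cB$, while Theorem~\ref{MainTh2} bounds the same quantity below by $cB$; hence
\[
cB\ \le\ \int_{M^n}\Big\Vert R-\frac{\scal}{n(n-1)}R_1\Big\Vert^{n/2}dM+\int_{M^n}\big(S-\delta n^2H^2\big)_+^{n/2}dM\ <\ \lambda c+(1-\lambda)cB.
\]
Therefore $\lambda cB<\lambda c$, so $B<1$ since $\lambda\in(0,1)$ and $c>0$; as $B$ is a nonnegative integer, $B=0$. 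This is the only point at which integrality of the Betti numbers is used, and it is really the crux.

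Once $B=0$ the second hypothesis reads $\int_{M^n}\big(S-\delta n^2H^2\big)_+^{n/2}dM\le0$, which forces $\big(S-\delta n^2H^2\big)_+\equiv0$, i.e.\ $S\le\delta n^2H^2$ everywhere: $f$ is $\delta$-pinched. The intermediate integral then vanishes, so the first hypothesis reduces to $\int_{M^n}\Vert R-(\scal/n(n-1))R_1\Vert^{n/2}dM<\lambda c<c$, which is precisely inequality~(\ref{MIneq4}). Part~(i) of Theorem~\ref{MainTh2} applies verbatim and gives at once that $M^n$ has the homotopy type of a CW-complex with no cells of dimension $i$ for $k\le i\le n-k$, and, when $k=2$, that $\pi_1(M^n)$ is a free group on $\beta_1(M^n;\Z)$ generators, with $M^n$ homeomorphic to $\Sp^n$ when that group is finite. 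This settles the main assertion and item~(i).

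For item~(ii) the extra input is $\delta=1/(n-1)$ together with $H>0$ everywhere. Combined with the $\delta$-pinching just obtained, this means $S\le n^2H^2/(n-1)$ pointwise on a compact submanifold of Euclidean space with nowhere-vanishing mean curvature, which is exactly the hypothesis of the differentiable sphere theorems for $1/(n-1)$-pinched immersions in the literature cited before the corollary (e.g.\ \cite{Andrews,Barbosa}); one concludes that $M^n$ is diffeomorphic to $\Sp^n$. I do not expect any real obstacle: apart from quoting that external sphere theorem in~(ii), the argument is entirely the bookkeeping recorded above, the only delicate point being the integrality step that upgrades $B<1$ to $B=0$.
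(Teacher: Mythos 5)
Your proof is correct and takes essentially the same route as the paper: combine the two hypotheses with the lower bound of Theorem~\ref{MainTh2} to force $\sum_{i=k}^{n-k}\beta_i(M^n;\F)=0$ by integrality, deduce $\delta$-pinching and inequality~(\ref{MIneq4}), then invoke Theorem~\ref{MainTh2}(i) and the Andrews--Baker theorem. The paper's version is the same argument, merely compressed into two sentences.
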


\begin{corollary}\po\label{Cor1}
If a compact $n$-dimensional Riemannian manifold 
$M^n$, $n\geq 4,$ admits an isometric immersion in 
$\R^{n+k},2\leq k\leq n/2,$ such that
$$
\int_{M^n}\Big\Vert R-\frac{\scal}{n(n-1)}R_1 \Big\Vert^{n/2}\  dM
\leq \lambda\  c(n,\delta)\sum_{i=k}^{n-k}\beta_i(M^n;\F)
$$
and
$$
\int_{M^n} \big(S-\delta n^2H^2\big)_+^{n/2}\ dM
< (1-\lambda)\  c(n,\delta),
$$
where $\lambda\in(0,1)$ and $\delta\in(1/n,1)$, then $M^n$ is isometric to a constant 
curvature sphere.
\end{corollary}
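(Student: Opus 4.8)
\textbf{Proof proposal for Corollary \ref{Cor1}.}

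The plan is to combine the two hypotheses to place the immersion in the regime already analyzed in Theorem \ref{MainTh2}, and then squeeze out the rigidity from the borderline case. First I would add the two displayed inequalities. Since $\lambda\in(0,1)$, the sum
$$
\int_{M^n}\Big\Vert R-\frac{\scal}{n(n-1)}R_1 \Big\Vert^{n/2}\  dM
+\int_{M^n} \big(S-\delta n^2H^2\big)_+^{n/2}\ dM
$$
is bounded above by $\lambda\, c(n,\delta)\sum_{i=k}^{n-k}\beta_i(M^n;\F)+(1-\lambda)\,c(n,\delta)$. On the other hand, the main inequality of Theorem \ref{MainTh2} bounds this same sum from below by $c(n,\delta)\sum_{i=k}^{n-k}\beta_i(M^n;\F)$. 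Comparing the two, one obtains
$$
c(n,\delta)\sum_{i=k}^{n-k}\beta_i(M^n;\F)\leq \lambda\, c(n,\delta)\sum_{i=k}^{n-k}\beta_i(M^n;\F)+(1-\lambda)\,c(n,\delta),
$$
hence $(1-\lambda)\sum_{i=k}^{n-k}\beta_i(M^n;\F)\leq (1-\lambda)$, i.e. $\sum_{i=k}^{n-k}\beta_i(M^n;\F)\leq 1$. Since $\beta_k(M^n;\F)\geq 0$ is an integer and (by Poincar\'e duality together with $k\leq n/2$) the index $i=0$ may or may not fall in the range $[k,n-k]$, I would be careful here: for $k\geq 1$ the only potentially large contribution forced to vanish is in the middle degrees, so the conclusion I actually extract is that the second displayed hypothesis now reads $\int_{M^n} \big(S-\delta n^2H^2\big)_+^{n/2}\ dM< (1-\lambda)c(n,\delta)\leq c(n,\delta)$, and adding the first hypothesis (also $<c(n,\delta)$ once the Betti sum is controlled) puts us in the situation of inequality \eqref{MIneq4}.

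Next I would invoke part (i) of Theorem \ref{MainTh2}: under \eqref{MIneq4}, $M^n$ has the homotopy type of a CW-complex with no cells in dimensions $k,\dots,n-k$, so in particular $H_i(M^n;\F)=0$ for all $k\leq i\leq n-k$ and every field $\F$. Combined with Poincar\'e duality this forces all Betti numbers except $\beta_0$ and $\beta_n$ to vanish, and the Euler-characteristic/fundamental-group discussion there gives that $M^n$ is a homotopy sphere; when $k=2$ the extra information on $\pi_1$ can be used as well, but for the generic $k$ the homological vanishing already yields an integral homology sphere. The decisive point is then to upgrade "homotopy sphere with small total curvature" to "isometric to a round sphere." For this I would argue that the strict inequalities, together with the Chern--Lashof estimate that is the engine of Theorem \ref{MainTh2}, force the total absolute curvature (the Chern--Lashof integral) to be exactly that of the standard embedding, i.e. equal to twice the volume of the unit sphere $\Sp^{n+k-1}$; equality in Chern--Lashof characterizes the immersion as a convex hypersurface in an affine $(n+1)$-subspace, and being a submanifold of a sphere-like tight shape with $S\leq\delta n^2H^2<n^2H^2$ everywhere forces it to be a metric sphere. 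Concretely: since $\sum_{i\ge 1}\beta_i=0$ but the lower bound in Theorem \ref{MainTh2} is proved by bounding each Morse-theoretic cell contribution, the hypothesis with strict inequality $<(1-\lambda)c(n,\delta)$ leaves no slack, and tracing back through the algebraic inequality of Proposition \ref{CrProp2} one sees that $R=\big(\scal/n(n-1)\big)R_1$ pointwise and $S=\delta n^2 H^2$ cannot both be violated; pushing this, $M^n$ must be a space form, and compactness plus the embedding in $\R^{n+k}$ forces positive constant curvature.

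The main obstacle I anticipate is precisely this last rigidity step: extracting \emph{isometry} to a round sphere, rather than merely a homotopy or homeomorphism statement, from two \emph{strict} integral inequalities. The cleanest route is to show that if $M^n$ were not of constant curvature, then $\int_{M^n}\big\Vert R-(\scal/n(n-1))R_1\big\Vert^{n/2}dM>0$ strictly, and to combine this with the sharp constant $c(n,\delta)$ appearing in Theorem \ref{MainTh2} — which is calibrated so that any non-spherical competitor forces the Betti sum (or the pinching defect) to jump — to get a contradiction with the assumed strict bounds. In other words, the constant $c(n,\delta)$ is chosen to be the exact threshold below which the space form / round-sphere conclusion is forced; I would therefore phrase the argument as: the hypotheses put $M^n$ strictly below all the relevant thresholds simultaneously, equality in Chern--Lashof and in Proposition \ref{CrProp2} must hold, and the equality case of those two results is exactly the round sphere. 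I expect the delicate bookkeeping to be in keeping track of the factor $\lambda$ and making sure the ``$+$'' on the left-hand side of the main inequality is correctly distributed, but no genuinely new idea beyond Theorem \ref{MainTh2} and the characterization of equality should be needed.
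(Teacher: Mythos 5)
There is a genuine gap. Your first step is essentially right: adding the two hypotheses and comparing with the main inequality of Theorem~\ref{MainTh2} yields
$$
c(n,\delta)\sum_{i=k}^{n-k}\beta_i(M^n;\F)\leq \int_{M^n}\Big\Vert R-\frac{\scal}{n(n-1)}R_1\Big\Vert^{n/2}dM+\int_{M^n}\big(S-\delta n^2H^2\big)_+^{n/2}dM<\lambda\,c(n,\delta)\sum_{i=k}^{n-k}\beta_i(M^n;\F)+(1-\lambda)c(n,\delta),
$$
which gives $\sum_{i=k}^{n-k}\beta_i(M^n;\F)<1$ (strict, because the second hypothesis is strict). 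As this is a non-negative integer, it must vanish. But then you abandon the decisive observation: feeding $\sum_{i=k}^{n-k}\beta_i(M^n;\F)=0$ back into the \emph{first} hypothesis gives $\int_{M^n}\big\Vert R-(\scal/n(n-1))R_1\big\Vert^{n/2}dM\leq 0$, hence $R=\big(\scal/n(n-1)\big)R_1$ \emph{pointwise}. This is the whole point of stating the first hypothesis with a right-hand side proportional to the Betti sum.

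Once $R=\big(\scal/n(n-1)\big)R_1$ pointwise, the rest is classical rigidity: Schur's lemma gives that $M^n$ is a space form; the theorem of Chern--Otsuki--Kuiper (\cite[Corollary 4.8]{KN}) forces the constant sectional curvature of a compact manifold immersed in $\R^{n+k}$ with $k\leq n/2$ to be positive; and Moore \cite[Proposition 4]{JDM1} then gives that the immersion is isometric to a round sphere. Your proposed alternative---trying to force equality in Chern--Lashof or in Proposition~\ref{CrProp2}---cannot work: the hypotheses are \emph{strict} inequalities and one never derives an equality case from strict inequalities. Indeed you flag this yourself as "the main obstacle," but the obstacle is an artifact of the route you took; the route the paper takes sidesteps it entirely by getting pointwise constant curvature from the vanishing of the first integral, not from an equality case of Chern--Lashof. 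The detour through part~(i) of Theorem~\ref{MainTh2} (homotopy type, homeomorphism to $\Sp^n$ when $k=2$) is also unnecessary and, in any case, only produces topological conclusions, never the isometry claimed in the corollary.
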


Minimal submanifolds with pinched second fundamental form
have been studied by Simons \cite{simons}, Chern, do Carmo, 
Kobayashi \cite{ChDoKo} and Leung \cite{Leung}, among others.
We provide intrinsic obstructions for minimal submanifolds in spheres
with sufficiently pinched second fundamental form.

\begin{corollary}\po\label{ThmMin2}
Let $f\colon M^n\rightarrow \Sp^{n+k-1}, 2\leq k\leq n/2,$ be an 
isometric minimal immersion of a compact $n$-dimensional Riemannian 
manifold $M^n, n\geq 4$. If the squared norm of the second fundamental form 
satisfies $S\leq n(\delta n-1)$ for some $\delta\in(1/n,1)$, then 
$$
\int_{M^n}\Big\Vert R-\frac{\scal}{n(n-1)}R_1 \Big\Vert^{n/2}\ dM
\geq c(n,\delta)\sum_{i=k}^{n-k}\beta_i(M^n;\F).
$$
In particular, if $$\int_{M^n}\Big\Vert R-\frac{\scal}{n(n-1)}R_1 \Big\Vert^{n/2}\ dM 
< c(n,\delta),$$ then $M^n$ has the homotopy type of a CW-complex with no cells 
of dimension $i$ for $k\leq i\leq n-k$.  Moreover, if $k=2$, then the fundamental 
group $\pi_1(M^n)$ is a free group on $\beta_1(M^n;\Z)$ generators and if 
$\pi_1(M^n)$ is finite then $M^n$ is homeomorphic to $\Sp^n$.
\end{corollary}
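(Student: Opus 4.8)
The plan is to reduce the statement to Theorem \ref{MainTh2} by a standard manipulation of the Gauss equation for minimal submanifolds of the sphere. First I would recall that if $f\colon M^n\to\Sp^{n+k-1}$ is minimal, then composing with the standard inclusion $\Sp^{n+k-1}\hookrightarrow\R^{n+k}$ yields an isometric immersion $\bar f\colon M^n\to\R^{n+k}$ whose second fundamental form is that of $f$ in $\Sp^{n+k-1}$ together with the (umbilical) contribution coming from the inclusion of the sphere. Concretely, the squared norm $\bar S$ of the second fundamental form of $\bar f$ satisfies $\bar S = S + n$, and the mean curvature vector of $\bar f$ is purely the normal to the sphere, so $\bar H^2 = 1$ (since $f$ is minimal in $\Sp^{n+k-1}$, there is no tangential mean curvature contribution in $\R^{n+k}$). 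Thus for the immersion $\bar f$ we have $\bar S - \delta n^2 \bar H^2 = S + n - \delta n^2$.

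The key point is then purely numerical: the hypothesis $S\le n(\delta n - 1) = \delta n^2 - n$ is precisely the inequality $S + n - \delta n^2 \le 0$, i.e. $\bar S - \delta n^2 \bar H^2\le 0$ everywhere on $M^n$. Hence the deviation term vanishes identically:
\[
\int_{M^n}\big(\bar S - \delta n^2 \bar H^2\big)_+^{n/2}\,dM = 0.
\]
Moreover, since $f$ and $\bar f$ induce the same metric $g$ on $M^n$, the intrinsic tensor $R - (\scal/n(n-1))R_1$ is unchanged, so its $L^{n/2}$-norm is the same whether computed for $f$ or $\bar f$.

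Applying Theorem \ref{MainTh2} to the immersion $\bar f\colon M^n\to\R^{n+k}$ with the same $\delta\in(1/n,1)$, the second integral drops out and we obtain directly
\[
\int_{M^n}\Big\Vert R-\frac{\scal}{n(n-1)}R_1\Big\Vert^{n/2}\,dM \ge c(n,\delta)\sum_{i=k}^{n-k}\beta_i(M^n;\F).
\]
The ``in particular'' clause — the homotopy type statement when this integral is below $c(n,\delta)$, the freeness of $\pi_1$ when $k=2$, and the homeomorphism to $\Sp^n$ when $\pi_1$ is finite — follows verbatim from part (i) of Theorem \ref{MainTh2}, again because the second integral in \eqref{MIneq4} is zero, so \eqref{MIneq4} holds as soon as the first integral is less than $c(n,\delta)$. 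I do not anticipate a genuine obstacle here; the only point requiring a little care is the bookkeeping for the second fundamental form under the composition with $\Sp^{n+k-1}\hookrightarrow\R^{n+k}$, namely verifying $\bar S = S + n$ and $\bar H = 1$, which is a direct consequence of the fact that the sphere is totally umbilical with unit principal curvatures and $f$ is minimal.
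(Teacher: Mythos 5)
Your proposal is correct and matches the paper's argument exactly: the paper likewise forms $\widetilde f = i\circ f$ with $i\colon \Sp^{n+k-1}\hookrightarrow\R^{n+k}$ the totally umbilic inclusion and then invokes Theorem \ref{MainTh2}, leaving the verification that $\bar S = S+n$, $\bar H = 1$, and hence $\big(\bar S - \delta n^2\bar H^2\big)_+ \equiv 0$ to the reader. You have simply supplied the bookkeeping the paper omits.
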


It is well known that the Weyl tensor $\W$ of a $n$-dimensional 
Riemannian manifold $M^n, n\geq 4,$ vanishes if and only if $M^n$
is conformally flat.
The study of conformally flat manifolds, from the point of view 
of submanifold theory, was initiated by Cartan 
in \cite{ECartan}. The case of compact conformally flat hypersurfaces of 
Euclidean space has been studied by Do Carmo, Dajczer and Mercuri \cite{MMF}.
For low codimension $k$, Moore \cite{JDM2} proved that such submanifolds 
have the homotopy type of a 
CW-complex with no cells of dimension $i,\ k<i<n-k$.
Therefore, it is natural to seek for restrictions on the 
topology of compact almost conformally flat submanifolds, in the sense 
that the Weyl tensor is sufficiently small in a suitable norm.

The case of hypersurfaces has been
treated in \cite{OV}. In this paper, we prove an inequality for the 
$L^{n/2}$-norm of the Weyl tensor for compact $n$-dimensional 
Riemannian manifolds that allow conformal immersions
in the Euclidean space with low codimension. As a consequence,
 we obtain a partial answer to the above question.

\begin{theorem}\po\label{MainTh}
Given $n\geq 6$ and $\delta\in(1/n,1)$, there exists a positive 
constant $c_1(n,\delta)$ such that if $M^n$ is a compact 
$n$-dimensional Riemannian manifold that admits a conformal 
immersion in $\R^{n+k},2\leq k\leq [(n-2)/2],$ then 
$$
\int_{M^n}\Vert \mathcal{W} \Vert^{n/2}\ dM
+\int_{M^n} \big(S-\delta n^2H^2\big)_+^{n/2}\ dM
 \geq c_1(n,\delta)\sum_{i=k+1}^{n-k-1}\beta_i(M^n;\F).
$$
In particular, if 
$$
\int_{M^n}\Vert \mathcal{W} \Vert^{n/2}\ dM+
\int_{M^n} \big(S-\delta n^2H^2\big)_+^{n/2}\ dM
< c_1(n,\delta),
$$
then $M^n$ has the homotopy type of a CW-complex with no cells of 
dimension $i$ for $k<i<n-k$.
\end{theorem}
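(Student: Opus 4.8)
The plan is to reduce the conformal statement to the isometric statement of Theorem \ref{MainTh2} by passing to a suitable conformal change of the metric, combined with the conformal invariance of the Weyl tensor. First I would recall that if $f\colon M^n\to\R^{n+k}$ is a conformal immersion, then there is a positive function $\varphi\in C^\infty(M^n)$ such that $f$ is an isometric immersion with respect to the conformally changed metric $\tilde g=\varphi^2 g$; here $M^n$ is compact, so $\tilde g$ is again a smooth complete metric and the ambient codimension is unchanged. The key point is that the quantity $\Vert\W\Vert^{n/2}\,dM$ is a pointwise conformal invariant: under $\tilde g=\varphi^2 g$ one has $\widetilde\W=\varphi^2\W$ as $(0,4)$-tensors, hence $\Vert\widetilde\W\Vert_{\tilde g}=\varphi^{-2}\Vert\W\Vert_g$, while $d\widetilde M=\varphi^n\,dM$, so that $\Vert\widetilde\W\Vert_{\tilde g}^{n/2}\,d\widetilde M=\Vert\W\Vert_g^{n/2}\,dM$ and therefore $\int_{M^n}\Vert\widetilde\W\Vert^{n/2}\,d\widetilde M=\int_{M^n}\Vert\W\Vert^{n/2}\,dM$. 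Similarly, since the Weyl tensor is a summand of the full curvature tensor orthogonal to the ``$R_1$-part'' in the appropriate decomposition, I would either note directly or prove that $\Vert\widetilde R-(\widetilde{\scal}/n(n-1))\widetilde R_1\Vert^{n/2}\,d\widetilde M \geq \Vert\widetilde\W\Vert^{n/2}\,d\widetilde M$ pointwise, because $R-(\scal/n(n-1))R_1$ is the sum of the Weyl part and the traceless-Ricci part, which are mutually orthogonal, so its norm dominates that of $\W$.

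Next I would apply Theorem \ref{MainTh2} to the isometric immersion $f\colon(M^n,\tilde g)\to\R^{n+k}$. Note the dimension/codimension hypotheses match up: $n\geq 6$ forces $n\geq 4$, and $2\leq k\leq[(n-2)/2]$ implies $2\leq k\leq n/2$, so Theorem \ref{MainTh2} applies and yields, with $c_1(n,\delta):=c(n,\delta)$,
\[
\int_{M^n}\Big\Vert\widetilde R-\frac{\widetilde{\scal}}{n(n-1)}\widetilde R_1\Big\Vert^{n/2}d\widetilde M+\int_{M^n}\big(\widetilde S-\delta n^2\widetilde H^2\big)_+^{n/2}d\widetilde M\ \geq\ c(n,\delta)\sum_{i=k}^{n-k}\beta_i(M^n;\F).
\]
Since the Betti numbers are topological invariants, $\beta_i(M^n;\F)$ is the same whether computed from $g$ or $\tilde g$; dropping the term with $i=k$ and $i=n-k$ from the right-hand sum only weakens the inequality and produces $\sum_{i=k+1}^{n-k-1}\beta_i(M^n;\F)$, as desired. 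On the left, the first integral equals $\int_{M^n}\Vert\W\Vert^{n/2}\,dM$ by the conformal invariance just discussed, and the second integral, computed with respect to $\tilde g$, is precisely the deviation-from-$\delta$-pinched term $\int_{M^n}\big(S-\delta n^2H^2\big)_+^{n/2}\,dM$ of the isometric immersion $f\colon(M^n,\tilde g)\to\R^{n+k}$ --- here I must be careful that $S$, $H$ in the statement refer to the second fundamental form of $f$ with respect to $\tilde g$, which is the natural reading since $f$ is conformal, not isometric, for $g$. Combining these identifications gives the claimed inequality. The ``in particular'' clause follows exactly as in Theorem \ref{MainTh2}(i): if the left-hand side is below $c_1(n,\delta)$, then each $\beta_i(M^n;\F)=0$ for $k<i<n-k$ over every field $\F$, and the Morse-theoretic argument behind Theorem \ref{MainTh2} (Chern--Lashof together with the Gauss equation for $\tilde g$) shows $M^n$ has the homotopy type of a CW-complex with no cells in those dimensions.

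I expect the main obstacle to be the careful bookkeeping of which metric each geometric quantity is measured in, and in particular verifying cleanly that $R-(\scal/n(n-1))R_1$ dominates $\W$ in norm after the conformal change; this rests on the orthogonal irreducible decomposition of the algebraic curvature tensor (Weyl $\oplus$ traceless Ricci $\oplus$ scalar), so that subtracting the scalar part $(\scal/n(n-1))R_1$ leaves Weyl plus traceless Ricci, whose squared norm is $\Vert\W\Vert^2+\Vert\text{traceless Ric}\Vert^2\cdot(\text{const})\geq\Vert\W\Vert^2$. A secondary point requiring care is that the existence of the conformal factor $\varphi$ making $f$ isometric for $\tilde g$ is immediate from the definition of conformal immersion, but one should confirm $(M^n,\tilde g)$ meets the standing assumptions (compact, connected, oriented) of Theorem \ref{MainTh2}, which it does since these are preserved under conformal change. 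Apart from these, the argument is a direct transfer and no new analytic input beyond Theorem \ref{MainTh2} is needed.
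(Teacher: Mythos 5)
Your plan contains a directional error that is fatal to the argument. You correctly observe the orthogonal decomposition $R-\frac{\scal}{n(n-1)}R_1 = \W + (\text{trace-free Ricci part})$, and hence the pointwise inequality $\Vert R-\frac{\scal}{n(n-1)}R_1\Vert^2 = \Vert\W\Vert^2 + (\text{something}) \geq \Vert\W\Vert^2$. But this inequality points the \emph{wrong way} for the deduction you want. Theorem~\ref{MainTh2} gives a lower bound on
\[
\int_{M^n}\Big\Vert\widetilde R-\frac{\widetilde\scal}{n(n-1)}\widetilde R_1\Big\Vert^{n/2}d\widetilde M + \int_{M^n}\big(\widetilde S-\delta n^2\widetilde H^2\big)_+^{n/2}d\widetilde M,
\]
and the Weyl integral $\int\Vert\W\Vert^{n/2}dM$ is smaller than or equal to the first term (after the conformal-invariance identification), not equal to it. So from Theorem~\ref{MainTh2} plus your pointwise bound you obtain $\int\Vert\W\Vert^{n/2}dM + \int(\cdots)_+^{n/2} \leq (\text{something}) \geq c(n,\delta)\sum\beta_i$, which is vacuous: a quantity dominated by something that has a lower bound need not itself have that lower bound. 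At the point in your write-up where you claim ``the first integral equals $\int\Vert\W\Vert^{n/2}\,dM$,'' it does not --- it dominates it --- and that is where the proof breaks.

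The reduction you hope for is not possible, and the weakening of the Betti-number sum from $\sum_{i=k}^{n-k}$ to $\sum_{i=k+1}^{n-k-1}$ is a symptom: the Weyl tensor carries strictly less information than the full trace-free curvature, so one cannot control the shape operator as tightly. The paper instead proves a separate algebraic inequality, Proposition~\ref{CrProp}, directly in terms of $\Vert\mathsf{W}(\beta)\Vert^2$ and the set $\Omega(\beta)=\{u:\,k<\mathrm{Index}\,\beta^\sharp(u)<n-k\}$ with \emph{strict} index bounds. Its proof relies on Lemma~\ref{JDMLe2} (Moore's flat-bilinear-form lemma in Lorentzian signature, giving nullity $\geq \dim V - \dim W$) rather than Lemma~\ref{ThVLe} (nullity $\geq \dim V - \dim W + 1$), which is exactly what accounts for the loss of the endpoint indices $i=k$ and $i=n-k$. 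Your conformal-change preliminaries --- passing to $\tilde g$ to make $f$ isometric, conformal invariance of $\int\Vert\W\Vert^{n/2}dM$, interpreting $S$, $H$ relative to $\tilde g$ --- are all correct and do appear in the paper's proof; what is missing is the independent algebraic input (Proposition~\ref{CrProp}) that bounds $\Vert\W\Vert$ from below, which cannot be imported from Proposition~\ref{CrProp2} by the dominance argument you propose.
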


As an application of Theorem \ref{MainTh}, we may obtain results similar to
Corollaries 2-5 for the Weyl tensor instead of the tensor $R-\big(\scal/n(n-1)\big)R_1$. For instance,
we have the following

\begin{corollary}\po
If a compact $n$-dimensional Riemannian manifold $M^n, 
n\geq 6,$ admits a conformal $\delta$-pinched
immersion in $\R^{n+k},2\leq k
\leq [(n-2)/2],$ for some $\delta\in(1/n,1)$, then 
$$
\int_{M^n}\Vert \mathcal{W} \Vert^{n/2}\ dM
\geq c_1(n,\delta)\sum_{i=k+1}^{n-k-1}\beta_i(M^n;\F).
$$
In particular, if 
$$\int_{M^n}\Vert \mathcal{W} \Vert^{n/2}\ dM< c_1(n,\delta),$$ 
then $M^n$ has the homotopy type of a 
CW-complex with no cells of dimension $i$ for $k<i<n-k$. 
\end{corollary}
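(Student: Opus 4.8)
The plan is to deduce this corollary directly from Theorem \ref{MainTh} by exploiting the defining property of a $\delta$-pinched immersion, so no new analytic or topological input is required. First I would recall that, by definition, a $\delta$-pinched immersion satisfies $S\leq \delta n^2H^2$ at every point of $M^n$; hence the function $\big(S-\delta n^2H^2\big)_+=\max\{S-\delta n^2H^2,0\}$ vanishes identically, and therefore $\int_{M^n}\big(S-\delta n^2H^2\big)_+^{n/2}\,dM=0$. Substituting this into the main inequality of Theorem \ref{MainTh} immediately yields
$$
\int_{M^n}\Vert \W\Vert^{n/2}\,dM\ \geq\ c_1(n,\delta)\sum_{i=k+1}^{n-k-1}\beta_i(M^n;\F),
$$
which is the first assertion of the corollary.

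For the ``in particular'' part, I would observe that under the hypothesis $\int_{M^n}\Vert\W\Vert^{n/2}\,dM<c_1(n,\delta)$ the combined quantity $\int_{M^n}\Vert\W\Vert^{n/2}\,dM+\int_{M^n}\big(S-\delta n^2H^2\big)_+^{n/2}\,dM$ is still strictly less than $c_1(n,\delta)$, the added term being zero by the pinching hypothesis. Thus the last clause of Theorem \ref{MainTh} applies verbatim and gives that $M^n$ has the homotopy type of a CW-complex with no cells of dimension $i$ for $k<i<n-k$.

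There is essentially no obstacle here: the corollary is a straightforward specialization of Theorem \ref{MainTh}, and the only points requiring attention are bookkeeping ones — namely that the codimension range $2\leq k\leq [(n-2)/2]$ and the dimension restriction $n\geq 6$ are precisely those of Theorem \ref{MainTh}, and that a $\delta$-pinched immersion necessarily has $\delta\geq 1/n$, so that the constraint $\delta\in(1/n,1)$ is the natural one. All the substantive content (the algebraic inequality for symmetric bilinear forms, the Chern--Lashof estimates, and the Morse-theoretic argument) is already encapsulated in Theorem \ref{MainTh}.
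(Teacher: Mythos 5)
Your proposal is correct and is exactly the argument the paper intends: the corollary is a direct specialization of Theorem \ref{MainTh}, since the $\delta$-pinching hypothesis makes $\big(S-\delta n^2H^2\big)_+$ vanish identically and hence kills the second integral on the left-hand side. The paper itself does not spell out a separate proof, treating the corollary as an immediate consequence, which matches your reading.
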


\section{Algebraic auxiliary results}

This section is devoted to some algebraic results that are crucial for the proofs. 
Let $V$ and $W$ be finite dimensional real vector spaces equipped with non-degenerate inner 
products which, by abuse of notation, are both denoted by $\langle\cdot,\cdot\rangle$.
The inner product of $V$ is assumed to be positive definite. 
We denote by $\Hom(V\times V,W)$ the space 
of all bilinear forms and by $\mathrm{Sym}(V\times V,W)$ its subspace that consists 
of all symmetric bilinear forms. The space $\mathrm{Sym}(V\times V,W)$ can be viewed as 
a complete metric space with respect to the usual Euclidean norm $\Vert\cdot\Vert$.

The {\it Kulkarni-Nomizu product} of two bilinear forms 
$\phi,\psi\in \mathrm{Hom}(V\times V,\mb{R})$ 
is the $(0,4)$-tensor 
$\phi\varowedge \psi\colon V\times V\times V\times V \rightarrow \mb{R}$ 
defined by 
\bea
\phi\varowedge \psi(x_1,x_2,x_3,x_4) &=& 
\phi(x_1,x_3) \psi(x_2,x_4)+\phi(x_2,x_4) \psi(x_1,x_3) \\
 & &-\phi(x_1,x_4) \psi(x_2,x_3)-\phi(x_2,x_3)\psi(x_1,x_4).
\eea

Using the inner product of $W$, we 
extend the {\it Kulkarni-Nomizu product} to bilinear forms 
$\beta,\gamma\in\Hom(V\times V,W)$, as the $(0,4)$-tensor 
$\beta\varowedge \gamma\colon V\times V\times V\times V \rightarrow \mb{R}$ 
defined by 
\bea
\beta\varowedge\gamma(x_1,x_2,x_3,x_4) \!\!\!&=&\!\!\! 
\langle\beta(x_1,x_3),\gamma(x_2,x_4) \rangle 
-\langle\beta(x_1,x_4),\gamma(x_2,x_3) \rangle \\
\!\!\!& &\!\!\! +\langle\beta(x_2,x_4),\gamma(x_1,x_3) \rangle-
\langle\beta(x_2,x_3),\gamma(x_1,x_4) \rangle.
\eea

A bilinear form $\beta\in \mathrm{Hom}(V\times V,W)$ is called {\it{flat}} with respect 
to the inner product of $W$ if
$$\langle\beta(x_1,x_3),\beta(x_2,x_4)\rangle
-\langle\beta(x_1,x_4),\beta(x_2,x_3)\rangle=0$$ 
for all $x_1,x_2,x_3,x_4\in V$, or equivalently if $\beta\varowedge\beta=0.$

Associated to each bilinear form $\beta$ is the {\it nullity space} $\mathcal{N}(\beta)$ 
defined by $$\mathcal{N}(\beta)=\{x\in V\ :\ \beta(x,y)=0\ \ \text{for all}\ \ y\in V\}.$$

We need the following lemma, which was given in \cite[Lemma 2.1]{Vlachos}.

\begin{lemma}\po\label{ThVLe}
 Let $\beta\in\mathrm{Sym}(V\times V,W)$ be a bilinear form, where $V$ and $W$ are both equipped with positive definite 
 inner products and $\dim W\leq \dim V-2$. If $\beta \varowedge \beta= \mu \ \langle\cdot,\cdot\rangle\varowedge \langle\cdot,\cdot\rangle$ for some $\mu\neq 0$, then $\mu>0$ and there exist a unit vector $\xi\in W$ and a subspace $V_1\subseteq V$ such that 
$$\dim V_1\geq \dim V-\dim W+1$$ and
$$\beta(x,y)=\sqrt{\mu} \  \langle x,y\rangle\xi,\ \text{for all}\ x\in V_1\ \text{and}\ y\in V.$$
 \end{lemma}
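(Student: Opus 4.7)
The plan is to convert the constant-curvature-type identity $\beta\varowedge\beta=\mu\,\langle\cdot,\cdot\rangle\varowedge\langle\cdot,\cdot\rangle$ into a genuine flatness statement by enlarging $W$ by one dimension with an appropriately chosen signature, and then to invoke the flat bilinear form theory going back to Cartan, Otsuki, and Moore. Set $\tilde W=W\oplus\R$, equipped with the non-degenerate inner product that agrees with $\langle\cdot,\cdot\rangle_W$ on the first summand and has sign $-\mathrm{sgn}(\mu)$ on the second, and define $\tilde\beta\in\mathrm{Sym}(V\times V,\tilde W)$ by
$$\tilde\beta(x,y)=\bigl(\beta(x,y),\,\sqrt{|\mu|}\,\langle x,y\rangle\bigr).$$
A direct computation from the definition of $\varowedge$ yields $\tilde\beta\varowedge\tilde\beta=\beta\varowedge\beta-\mu\,\langle\cdot,\cdot\rangle\varowedge\langle\cdot,\cdot\rangle=0$, so $\tilde\beta$ is flat with respect to the inner product of $\tilde W$.

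To rule out $\mu<0$, I would argue as follows. In that case $\tilde W$ is positive definite of dimension $\dim W+1\le\dim V-1$, and the last coordinate of $\tilde\beta(x,x)$ equals $\sqrt{-\mu}\,|x|^2>0$ for every $x\ne 0$, so the diagonal map $x\mapsto\tilde\beta(x,x)$ has no zeros on $V\setminus\{0\}$. Flatness forces $\langle\tilde\beta(x,x),\tilde\beta(y,y)\rangle=|\tilde\beta(x,y)|^2\ge 0$, so the image of the diagonal lies in a pointed convex cone inside the Euclidean space $\tilde W$. A standard Otsuki-type estimate for flat symmetric bilinear forms with Euclidean target and trivial asymptotic cone then shows that $\dim\tilde W\ge\dim V$, contradicting the codimension hypothesis. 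Hence $\mu>0$.

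With $\mu>0$, the target $\tilde W$ has Lorentzian signature $(\dim W,1)$ and $\tilde\beta$ is flat, with the extra feature that the last component of $\tilde\beta(x,x)$ is strictly positive for $x\ne 0$. Now I would apply Moore's flat bilinear form lemma in the Lorentzian setting, which produces a nonzero null vector $\tilde\xi=(\xi_0,t_0)\in\tilde W$ and a subspace $V_1\subseteq V$ with $\dim V_1\ge\dim V-\dim W+1$ such that $\tilde\beta(x,y)\in\R\tilde\xi$ for all $x\in V_1$ and $y\in V$. Nullity of $\tilde\xi$ gives $|\xi_0|_W=|t_0|$; choose the sign so that $t_0>0$ and rescale to normalise $t_0=\sqrt\mu$, whence $|\xi_0|_W=\sqrt\mu$ and $\xi:=\xi_0/\sqrt\mu$ is a unit vector in $W$. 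Matching last coordinates in $\tilde\beta(x,y)=\lambda(x,y)\tilde\xi$ forces $\lambda(x,y)=\langle x,y\rangle$, and then the first-coordinate comparison delivers the required identity $\beta(x,y)=\sqrt\mu\,\langle x,y\rangle\xi$ on $V_1\times V$.

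The main obstacle is the sharpness of Moore's lemma at this step: extracting the dimension bound $\dim V_1\ge\dim V-\dim W+1$, which is one better than the generic $\dim V-\dim W$ bound available for an arbitrary flat form valued in signature $(\dim W,1)$, relies on the distinguished structure of $\tilde\beta$, namely that its last component is the canonical umbilical form $\sqrt\mu\,\langle\cdot,\cdot\rangle$. Carefully tracking how this umbilical axis enlarges the nullity subspace, rather than simply invoking Moore's lemma as a black box, is the delicate part of the argument and is where the codimension hypothesis $\dim W\le\dim V-2$ is used crucially.
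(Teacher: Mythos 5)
Your construction -- enlarging $W$ by one dimension with sign $-\mathrm{sgn}(\mu)$, passing to the flat form $\tilde\beta=(\beta,\sqrt{|\mu|}\,\langle\cdot,\cdot\rangle)$, and invoking Moore's lemma in the Lorentzian case -- is exactly the standard technique and matches the route the present paper takes for the closely related Lemma~\ref{JDMLe2} (where the target is enlarged by $\mathbb{R}^2$); the paper itself does not reprove Lemma~\ref{ThVLe} but cites \cite{Vlachos}. Two comments. First, your worry in the last paragraph is unfounded: Moore's lemma as stated (Lemma~\ref{JDMLe}) produces a nullity of dimension at least $\dim V-\dim U+2$, and here $U=\tilde W$ with $\dim\tilde W=\dim W+1$, so the bound is $\dim V-(\dim W+1)+2=\dim V-\dim W+1$, which is precisely what is wanted; no extra mileage from the umbilical last component is needed, only the fact that it makes $\tilde\beta(x,x)\neq0$ for $x\neq0$, which is required to apply Moore's lemma at all. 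The hypothesis $\dim W\le\dim V-2$ enters solely to ensure $\dim V>\dim\tilde W$. Second, in the case $\mu<0$ the clean reference is not Otsuki's lemma but Cartan's nullity bound for flat symmetric bilinear forms with positive definite target: if $\tilde\beta\colon V\times V\to\tilde W$ is flat and $\tilde W$ is Euclidean, then $\dim\mathcal{N}(\tilde\beta)\ge\dim V-\dim\tilde W$. Since $\dim\tilde W=\dim W+1\le\dim V-1<\dim V$, this forces a nonzero $x$ with $\tilde\beta(x,x)=0$, contradicting the positivity of the last coordinate $\sqrt{-\mu}\,|x|^2$; this is exactly the contradiction you want, just with the precise lemma named. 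With these small repairs your proof is complete and correct.
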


\smallskip

We define the map 
${\sf scal}\colon \mathrm{Sym}(V\times V,W)\rightarrow \R$ 
by 
$${\sf scal}(\beta)=\tr \  {\sf Ric} (\beta),$$ where 
$${\sf Ric}(\beta)(x,y)=\tr \  {\sf R}(\beta)(\cdot,x,\cdot,y),\ \ x,y\in V\ \ \text{and}\ \ {\sf R}(\beta) = \frac{1}{2}\beta\varowedge\beta.$$

Hereafter, we assume that $V$ and $W$ are both endowed with positive 
definite inner products. For each $\beta\in \mathrm{Sym}(V\times V,W)$, 
we define the map
$$
\beta^\sharp\colon W\rightarrow \mathrm{End}(V),\ \ \xi\mapsto \beta^\sharp(\xi)
$$
such that
$$
\langle \beta^\sharp(x),y\rangle=\langle \beta(x,y),\xi\rangle,\ \ 
\text{for all}\ x,y\in V,
$$
where $\mathrm{End}(V)$ denotes the set of all selfadjoint endomorphisms 
of $V$.

\smallskip

Let $\dim V=n$ and $\dim W=k$. When $2\leq k\leq n/2$, for each 
$\beta\in \mathrm{Sym}(V\times V,W)$, we denote by $\Phi(\beta)$ the  
subset of the unit $(k-1)$-sphere $\Sp^{k-1}$ in $W$ given by
 $$\Phi(\beta)=\{u\in\mathbb{S}^{k-1}\ :\ k\leq\mathrm{Index}\ \beta^\sharp(u)\leq n-k\}.$$

The following proposition is crucial for the proof of Theorem \ref{MainTh2}.

\begin{proposition}\po\label{CrProp2}
Given integers $2\leq k\leq n/2$ and $\lambda\in(1/n,1)$, there exists a positive constant $\varepsilon(n,k,\lambda)>0$, such that the following inequality holds
\begin{equation}\label{CrIneq2}
\frac{1}{4}\Big\Vert\beta\varowedge\beta-\frac{\sca(\beta)}{n(n-1)}\langle\cdot,\cdot\rangle\varowedge \langle\cdot,\cdot\rangle\Big\Vert^2
+\big(\Vert\beta\Vert^2-\lambda\vert\tr \  {\beta}\vert^2\big)_+^2
\geq \varepsilon(n,k,\lambda)\Big(\int_{\Lambda(\beta)}\vert\det\beta^\sharp (u)\vert d\Sp_u\Big)^{4/n}
\end{equation} for any $\beta\in \mathrm{Sym}(V\times V, W)$, where 
\begin{equation}\nonumber
\Lambda(\beta)=
\begin{cases}
\Phi(\beta), &\ \text{if}\ \ \sca(\beta)>0 \\
\Sp^{k-1}, &\ \text{if}\ \ \sca(\beta)\leq 0.
\end{cases}
\end{equation}
\end{proposition}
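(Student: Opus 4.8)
The plan is to argue by contradiction and exploit the compactness of the unit sphere in $\mathrm{Sym}(V\times V,W)$ after a homogeneity reduction. First I would note that both sides of (\ref{CrIneq2}) scale homogeneously: replacing $\beta$ by $t\beta$ multiplies the first term on the left by $t^4$, the second term by $t^4$ (since $\Vert t\beta\Vert^2-\lambda|\tr(t\beta)|^2=t^2(\Vert\beta\Vert^2-\lambda|\tr\beta|^2)$ and the positive part is then multiplied by $t^2$, squared gives $t^4$), and on the right $\det(t\beta)^\sharp(u)=t^n\det\beta^\sharp(u)$, so the integral scales by $t^n$ and its $4/n$-power by $t^4$. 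Also $\Phi(t\beta)=\Phi(\beta)$ and the sign of $\sca(t\beta)$ agrees with that of $\sca(\beta)$ for $t>0$, so $\Lambda(t\beta)=\Lambda(\beta)$. Hence it suffices to prove the inequality with a uniform $\varepsilon(n,k,\lambda)$ on the unit sphere $\Sigma=\{\beta:\Vert\beta\Vert=1\}$, which is compact.

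Next, suppose no such $\varepsilon$ exists. Then there is a sequence $\beta_j\in\Sigma$ with the left-hand side $L(\beta_j)\to 0$ while the right-hand side quantity $I(\beta_j):=\big(\int_{\Lambda(\beta_j)}|\det\beta_j^\sharp(u)|\,d\Sp_u\big)^{4/n}$ stays bounded away from $0$, say $I(\beta_j)\geq c_0>0$. By compactness, pass to a subsequence $\beta_j\to\beta_\infty\in\Sigma$. The two terms of $L$ are continuous in $\beta$ (the first obviously; the second because $\beta\mapsto(\Vert\beta\Vert^2-\lambda|\tr\beta|^2)_+^2$ is continuous), so $L(\beta_\infty)=0$. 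This forces simultaneously $\beta_\infty\varowedge\beta_\infty=\frac{\sca(\beta_\infty)}{n(n-1)}\langle\cdot,\cdot\rangle\varowedge\langle\cdot,\cdot\rangle$ and $\Vert\beta_\infty\Vert^2\leq\lambda|\tr\beta_\infty|^2$. Write $\mu:=\sca(\beta_\infty)/\big(n(n-1)\big)$. Now I would split into the cases $\mu\neq 0$ and $\mu=0$. If $\mu\neq 0$, then since $\dim W=k\leq n/2\leq n-2$ (as $n\geq 4$), Lemma \ref{ThVLe} applies: $\mu>0$ and there exist a unit $\xi\in W$ and a subspace $V_1$ with $\dim V_1\geq n-k+1$ such that $\beta_\infty(x,y)=\sqrt{\mu}\,\langle x,y\rangle\xi$ for all $x\in V_1,\ y\in V$. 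If $\mu=0$ then $\beta_\infty$ is flat, i.e.\ $\beta_\infty\varowedge\beta_\infty=0$; by a standard argument (or Lemma \ref{ThVLe}-type reasoning for the degenerate case) this still yields a large isotropy-type subspace, but more directly: a flat symmetric bilinear form into $W$ with $\dim W\leq\dim V-2$ has nullity space $\mathcal{N}(\beta_\infty)$ of dimension at least $\dim V-\dim W$, so again there is a subspace on which $\beta_\infty$ degenerates heavily.

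The final step is to derive the contradiction by showing $I(\beta_\infty)=0$, i.e.\ that the limit form cannot support the lower bound $c_0$. For each $u\in\Lambda(\beta_\infty)$ we must check $\det\beta_\infty^\sharp(u)=0$, so that $\int_{\Lambda(\beta_\infty)}|\det\beta_\infty^\sharp(u)|\,d\Sp_u=0$; then by continuity/dominated convergence (the integrands $|\det\beta_j^\sharp(u)|$ are uniformly bounded on $\Sp^{k-1}$ since $\Vert\beta_j\Vert=1$, and $\mathbf 1_{\Lambda(\beta_j)}$ behaves well since $\Lambda$ is governed by the index of $\beta_j^\sharp$, which is upper/lower semicontinuous away from degenerate spectra) one concludes $I(\beta_j)\to 0$, contradicting $I(\beta_j)\geq c_0$. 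In the case $\mu>0$: for any $u\in\Sp^{k-1}$, decompose $V=V_1\oplus V_1^\perp$; on $V_1$ the operator $\beta_\infty^\sharp(u)$ acts as $\sqrt{\mu}\,\langle u,\xi\rangle\,\mathrm{Id}_{V_1}$. If $\langle u,\xi\rangle=0$ then $V_1\subseteq\ker\beta_\infty^\sharp(u)$, so $\det\beta_\infty^\sharp(u)=0$. If $\langle u,\xi\rangle\neq0$, then $\beta_\infty^\sharp(u)$ has a positive eigenvalue of multiplicity $\geq\dim V_1\geq n-k+1$, forcing $\mathrm{Index}\,\beta_\infty^\sharp(u)\leq k-1<k$, hence $u\notin\Phi(\beta_\infty)=\Lambda(\beta_\infty)$ (here $\sca(\beta_\infty)>0$). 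Either way, for every $u\in\Lambda(\beta_\infty)$ we get $\det\beta_\infty^\sharp(u)=0$. In the case $\mu=0$ (so $\Lambda(\beta_\infty)=\Sp^{k-1}$): the large nullity subspace $\mathcal N(\beta_\infty)$ of dimension $\geq n-k\geq 1$ lies in $\ker\beta_\infty^\sharp(u)$ for every $u$, so $\det\beta_\infty^\sharp(u)=0$ identically. This completes the contradiction. The main obstacle I anticipate is the convergence bookkeeping in the last step: one must justify $\int_{\Lambda(\beta_j)}|\det\beta_j^\sharp(u)|\,d\Sp_u\to 0$ carefully, because the domain of integration $\Lambda(\beta_j)$ varies with $j$ and depends on an index condition that can jump; the cleanest route is to bound $|\det\beta_j^\sharp(u)|\leq\Vert\beta_j\Vert^n=1$ and show that the set where $|\det\beta_j^\sharp(u)|$ is not small shrinks to a set of measure zero using uniform convergence $\beta_j^\sharp\to\beta_\infty^\sharp$ on $\Sp^{k-1}$ together with the fact that $\det\beta_\infty^\sharp$ vanishes on all of $\Lambda(\beta_\infty)$ and that $\Lambda(\beta_j)$ is contained in a fixed neighborhood argument. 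A subsidiary point requiring care is the $\mu=0$ subcase, where Lemma \ref{ThVLe} does not directly apply and one instead invokes the dimension bound on the nullity space of a flat symmetric form in codimension $\geq 2$; the necessary estimate $\dim\mathcal N(\beta_\infty)\geq\dim V-\dim W$ follows from the flatness identity exactly as in the proof of Lemma \ref{ThVLe}.
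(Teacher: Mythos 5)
Your overall strategy — reduce by homogeneity, pass to a limit along a contradicting sequence, apply Lemma~\ref{ThVLe}, and then argue that the right-hand side must vanish in the limit — is sound, and differs from the paper mainly in the normalization: you put $\Vert\beta\Vert=1$ (a compact set), whereas the paper normalizes $\psi(\beta)=1$ (a non-compact set) and must therefore separately prove the minimizing sequence is bounded. Your normalization is in principle cleaner, but it introduces a genuine gap that corresponds precisely to the step you have skipped.

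The gap is in the very first sentence of the contradiction argument. The negation of the statement does \emph{not} give you a sequence $\beta_j\in\Sigma$ with $L(\beta_j)\to 0$ \emph{and} $I(\beta_j)\geq c_0>0$. It only gives a sequence with $I(\beta_j)>0$ and $L(\beta_j)<(1/j)\,I(\beta_j)$. Since $I$ is bounded on $\Sigma$, this forces $L(\beta_j)\to 0$, but there is no reason for $I(\beta_j)$ to stay bounded away from zero: you must allow for the degenerate case where $I(\beta_j)\to 0$ as well, so that $L/I\to 0$ via a ``$0/0$'' cancellation. In that case your step 6, which proves $I(\beta_j)\to 0$, produces no contradiction whatsoever — that is exactly what you would expect. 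This missing case is precisely what the paper's boundedness argument for the minimizing sequence $\{\beta_m\}\subset\{\psi=1\}$ handles: unboundedness of $\beta_m/\psi(\beta_m)^{1/n}$ corresponds, after rescaling to the unit sphere, to $I\to 0$. To repair your proof you would need to treat this subcase directly, for instance by rescaling $\gamma_j=\beta_j/\psi(\beta_j)^{1/n}$ and showing that $\phi_\lambda(\gamma_j)$ cannot tend to zero while $\psi(\gamma_j)=1$, which is essentially the paper's argument.

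Two secondary remarks. First, the subcase $\mu=0$ cannot actually occur and need not be discussed: the second term of $L$ vanishing at $\beta_\infty$ with $\Vert\beta_\infty\Vert=1$ gives $1\leq\lambda|\tr\beta_\infty|^2=\lambda(\sca(\beta_\infty)+1)$, hence $\sca(\beta_\infty)\geq 1/\lambda-1>0$ because $\lambda<1$, so $\mu>0$ automatically and Lemma~\ref{ThVLe} always applies. (Your cited ``nullity $\geq \dim V-\dim W$'' claim for flat forms is also not as immediate as stated, but it is moot.) Second, your sketch of the domain-of-integration bookkeeping in step 6 can be made precise and is correct in spirit: for $\sca(\beta_\infty)>0$ one shows via a Weyl-type eigenvalue perturbation argument that for any $\epsilon>0$ and $j$ large, $\Phi(\beta_j)\subseteq\{u\in\Sp^{k-1}:|\langle u,\xi\rangle|<\epsilon\}$, a set of volume $O(\epsilon)$; combined with $|\det\beta_j^\sharp(u)|\leq 1$ this gives $\int_{\Lambda(\beta_j)}|\det\beta_j^\sharp|\,d\Sp\to 0$. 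But note it is the shrinking of $\Lambda(\beta_j)$ itself, not of ``the set where $|\det\beta_j^\sharp|$ is not small,'' that does the work — the determinant does not become small near $u=\pm\xi$, those points are simply excluded from $\Phi(\beta_j)$.
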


\begin{proof}
We consider the functions $\phi_\lambda,\psi\colon \mathrm{Sym}(V\times V,W)\rightarrow \mathbb{R}$ 
defined by
$$\phi_\lambda(\beta)=\frac{1}{4}\Big\Vert\beta\varowedge\beta-
\frac{\sca(\beta)}{n(n-1)}\langle\cdot,\cdot\rangle\varowedge \langle\cdot,\cdot\rangle\Big\Vert^2
+\big(\Vert\beta\Vert^2-\lambda\vert\tr \  {\beta}\vert^2\big)_+^2$$
and $$\psi(\beta)=\int_{\Lambda(\beta)}|\det\beta^\sharp(u)|\ d\Sp_u.$$
We shall prove that $\phi_\lambda$ attains a positive minimum on $\Sigma_{n,k}$, where
$$\Sigma_{n,k}=\{\beta\in\mathrm{Sym}(V\times V,W)\ :\ \psi(\beta)=1\}.$$
There exists a sequence $\{\beta_m\}$ in $\Sigma_{n,k}$ such that 
$$\lim_{m\rightarrow\infty}\phi_\lambda(\beta_m)=\inf \phi_\lambda(\Sigma_{n,k})\geq 0.$$
We observe that $\beta_m\neq 0$ for all $m\in \mathbb{N}$, since $\beta_m\in\Sigma_{n,k}$. Then 
we may write $\beta_m=\Vert\beta_m\Vert\widehat{\beta}_m,$ where $\Vert\widehat{\beta}_m\Vert=1$.

We claim that the sequence $\{\beta_m\}$ is bounded. Assume to the contrary that there exists a 
subsequence of $\{\beta_m\}$, which by abuse of notation is again denoted by $\{\beta_m\}$, such that 
$\lim_{m\rightarrow \infty}\Vert\beta_m\Vert=\infty.$
Since $\Vert\widehat{\beta}_m\Vert=1$, we may assume, by taking a subsequence if necessary, that 
$\{\widehat{\beta}_m\}$ converges to some $\widehat{\beta}\in\mathrm{Sym}(V\times V,W)$ with 
$\Vert\widehat{\beta}\Vert=1$. Using the fact that $\phi_\lambda$ is homogeneous of degree $4$, 
we have $\phi_\lambda(\widehat{\beta}_m)=\phi_\lambda(\beta_m)/\Vert\beta_m\Vert^4.$
Thus $\lim_{m\rightarrow \infty}\phi_\lambda(\widehat{\beta}_m)=0$ and consequently 
$\phi_{\lambda}(\widehat{\beta})=0$, or equivalently 
\begin{equation}\nonumber
\widehat{\beta}\varowedge\widehat{\beta}=
\frac{\sca(\widehat{\beta})}{n(n-1)} \langle\cdot,\cdot\rangle\varowedge \langle\cdot,\cdot\rangle
\end{equation}
and
\begin{equation}\label{Eq11}
1\leq \lambda\vert\tr \ \ \widehat{\beta}\vert^2=\lambda(\sca(\widehat{\beta})+1).
\end{equation}
Since $\lambda<1$, equation (\ref{Eq11}) implies $\sca(\widehat{\beta})>0$. According to Lemma \ref{ThVLe}, 
there exists a unit vector $\widehat{\xi}\in W$ and subspace $\widehat{V}_1$ of $V$ with $\dim \widehat{V}_1\geq n-k+1$ such that 
\begin{equation}\label{eq22}
\widehat{\beta}(x,y)=\Big(\frac{\sca(\widehat{\beta})}{n(n-1)}\Big)^{1/2} \langle x,y\rangle\widehat{\xi}
\ \ \text{for all} \ \ x\in \widehat{V}_1\ \ \text{and}\ \ y\in V.
\end{equation}
Moreover, since $\{\beta_m\}$ is in $\Sigma_{n,k}$, there exists an open subset $\widehat{\CU}_m$
of $\Sp^{k-1}$ such that $\widehat{\CU}_m\subseteq \Lambda(\widehat{\beta}_m)$ and
$\det\widehat{\beta}_m^\sharp(u)\neq0$ for all  $u\in \widehat{\CU}_m$ and $m\in \mathbb{N}.$
From $\sca(\widehat{\beta})>0$, we deduce that $\sca(\widehat{\beta}_m)>0$ and so 
$\widehat{\CU}_m\subseteq \Phi(\widehat{\beta}_m)$ for $m$ large enough.

Let $\{\widehat{u}_m\}$ be a sequence such that $\widehat{u}_m\in \widehat{\CU}_m$ for all $m\in\mathbb{N}$. 
We may assume that $\{\widehat{u}_m\}$ is convergent, by passing if 
necessary to a subsequence and set $\widehat{u}=\lim_{m\rightarrow \infty}\widehat{u}_m$. Since $\lim_{m\rightarrow \infty}\widehat{\beta}_m^\sharp(\widehat{u}_m)=\widehat{\beta}^\sharp(\widehat{u})$ and $\widehat{u}_m\in\widehat{\CU}_m,$ we deduce 
that $\mathrm{Index}\ \widehat{\beta}^\sharp(\widehat{u}) \leq n-k.$ Then, from (\ref{eq22}) we obtain
$\langle \widehat{\xi},\widehat{u}\rangle\geq 0$. We claim that $\langle \widehat{\xi},\widehat{u}\rangle= 0$.
Indeed, if $\langle\widehat{\xi},\widehat{u}\rangle>0$, then (\ref{eq22}) implies that $\widehat{\beta}^\sharp(\widehat{u})$ has 
at least $n-k+1$ positive eigenvalues and so, for $m$ large enough, $\widehat{\beta}_m^\sharp(\widehat{u}_m)$ has at least 
$n-k+1$ positive eigenvalues. This and the fact that $\det\widehat{\beta}_m^\sharp(u)\neq 0$ for all $u\in \widehat{\CU}_m$, shows
that $\widehat{\beta}_m^\sharp(\widehat{u}_m)$ has at most $k-1$ negative eigenvalues. Therefore, 
$\mathrm{Index}\ \widehat{\beta}_m^\sharp(\widehat{u}_m)\leq k-1$ which is a contradiction, since $\widehat{u}_m\in \widehat{\CU}_m$. 

Thus, we have proved that for any convergent sequence $\{\widehat{u}_m\}$ such that $\widehat{u}_m\in \widehat{\CU}_m$ for all $m$, we have 
$\langle\lim_{m\rightarrow \infty}\widehat{u}_m,\widehat{\xi}\rangle=0.$

Since $\widehat{\CU}_m$ is open, we may choose convergent sequences  
$\{\widehat{u}_m^{(1)}\},\{\widehat{u}_m^{(2)}\},\cdots,\{\widehat{u}_m^{(k)}\}$ in $\widehat{\CU}_m$ such that 
$\widehat{u}_m^{(1)}, \widehat{u}_m^{(2)},\cdots, \widehat{u}_m^{(k)}$ span $W$ 
for all $m\in \mathbb{N}.$ From (\ref{eq22}) and the fact that $\langle\lim_{m\rightarrow \infty}\widehat{u}_m^{(a)},\widehat{\xi}\rangle=0$
for all $a\in\{1,2,\cdots,k\}$, we obtain that the restriction of $\widehat{\beta}_m$ to $\widehat{V}_1\times\widehat{V}_1$ 
satisfies
$$\lim_{m\rightarrow\infty} \widehat{\beta}_m\vert_{\widehat{V}_1\times\widehat{V}_1}=0$$ and consequently
\be\label{xx1}
\lim_{m\rightarrow\infty} (\widehat{\beta}_m\varowedge\widehat{\beta}_m)\vert_{\widehat{V}_1\times\widehat{V}_1\times
\widehat{V}_1\times\widehat{V}_1}=0.
\ee
From the inequality 
$$
\frac{1}{4}\Big\Vert\Big(\widehat{\beta}_m\varowedge\widehat{\beta}_m-
\frac{\sca(\widehat{\beta}_m)}{n(n-1)}\langle\cdot,\cdot\rangle\varowedge \langle\cdot,\cdot\rangle\Big)
\big\vert_{\widehat{V}_1\times\widehat{V}_1\times
\widehat{V}_1\times\widehat{V}_1}\Big\Vert^2
\leq\phi_\lambda(\widehat{\beta}_m),
$$
(\ref{xx1}) and the fact that $\lim_{m\rightarrow \infty}\phi_\lambda(\widehat{\beta}_m)=0$ we obtain $\sca(\widehat{\beta})=0$,
which contradicts (\ref{Eq11}). 
Thus, the sequence $\{\beta_m\}$ is bounded, and it converges to some $\beta\in\mathrm{Sym}(V\times V,W)$, by taking a
subsequence if necessary.

We claim that $\phi_\lambda(\beta)>0$. Arguing indirectly, we assume that $\phi_\lambda(\beta)=0$. Then 
\begin{equation}\nonumber
\beta\varowedge\beta=\frac{\sca(\beta)}{n(n-1)} \langle\cdot,\cdot\rangle\varowedge \langle\cdot,\cdot\rangle
\end{equation} 
and
\begin{equation}\label{Eq111}
\Vert\beta\Vert^2\leq \lambda \vert\tr \  \beta\vert^2=\lambda(\sca(\beta)+\Vert\beta\Vert^2).
\end{equation}
We notice that $\beta\neq0$. Indeed, if $\beta=0$, then $\beta^\sharp(u)=0$ for all $u\in \Sp^{k-1}$. 
Since $\beta_m\in \Sigma_{n,k}$ for all $m\in \mb{N}$, there exists $\xi_m\in \Lambda(\beta_m)$ such that 
\begin{equation}\label{3777}
|\det\beta_m^\sharp(\xi_m)|\Vol(\Lambda(\beta_m))=1\ \ \text{for all} \ m\in\mathbb{N}.
\end{equation}
We may assume that $\xi_m$ converges to some $\xi$, 
 by passing to a subsequence if necessary. Then $\lim_{m\rightarrow \infty}\beta_m^\sharp(\xi_m)=\beta^\sharp(\xi)=0,$ which 
 contradicts (\ref{3777}). Therefore $\beta\neq 0$.

Now, from (\ref{Eq111}) we obtain that $\sca(\beta)\neq 0$. Then, Lemma \ref{ThVLe} implies that $\sca(\beta)>0$ and 
there exists a unit vector $\xi\in W$ and a subspace $V_1$ of $V$ with $\dim V_1\geq n-k+1$ such that
\begin{equation}\label{eq266}
\beta(x,y)=\Big(\frac{\sca(\beta)}{n(n-1)}\Big)^{1/2} \langle x,y\rangle\xi,\ \ \text{for all}\  x\in V_1\ \ \text{and}\ y\in V.
\end{equation}

Since $\beta_m\in \Sigma_{n,k}$ for all $m\in \mathbb{N}$, there exists an open subset $\CU_m$ of $\Sp^{k-1}$ such that
$\CU_m\subseteq \Lambda(\beta_m)$ and $\det\beta_m^\sharp(u)\neq0$, for all $u\in \CU_m$ and $m\in \mathbb{N}.$
Moreover, we have that $\sca(\beta_m)>0$ and so 
$\CU_m\subseteq \Phi(\beta_m)$ for $m$ large enough.

Let $\{u_m\}$ be a sequence such that $u_m\in \CU_m$ for all $m\in\mathbb{N}$. We 
may assume that $u_m$ is convergent, by passing if necessary to a subsequence and set $u=\lim_{m\rightarrow \infty}u_m.$ Since 
$\lim_{m\rightarrow \infty}\beta_m^\sharp(u_m)=\beta^\sharp(u)$ and $u_m\in\CU_m$ it follows that $\mathrm{Index}\ \beta^\sharp(u) \leq n-k$.
Then, from (\ref{eq266}) we get $\langle \xi,u\rangle\geq 0$. We claim that $\langle\xi,u\rangle=0$. Indeed, if $\langle\xi,u\rangle>0$ then (\ref{eq266}) implies that $\beta^\sharp(u)$ has at least $n-k+1$ positive eigenvalues and so, for $m$ large enough, $\beta_m^\sharp(u_m)$ has at least $n-k+1$ positive eigenvalues. This and the fact that $\det \beta_m^\sharp(u)\neq 0$ for all $u\in \CU_m$ and $m\in \mathbb{N},$
shows that $\beta_m^\sharp(u_m)$ has at most $k-1$ negative eigenvalues, which is a contradiction, since $u_m\in \CU_m$ for all $m\in \mathbb{N}$. 
  
  Thus, we have proved that for any convergent sequence $\{u_m\}$ such that $u_m\in \CU_m$ for all $m$, we have 
$\langle\lim_{m\rightarrow \infty}u_m,\xi\rangle=0.$
Since $\CU_m$ is open, we may choose convergent sequences  $\{u_m^{(1)}\},\{u_m^{(2)}\},\cdots,\{u_m^{(k)}\}$ 
in $\CU_m$ such that $u_m^{(1)}, u_m^{(2)},\cdots, u_m^{(k)}$ span $W$ for all $m\in \mathbb{N}.$ Then, from (\ref{eq266}) 
and the fact that $\langle\lim_{m\rightarrow \infty}u_m^{(a)},\xi\rangle=0$ for all $a\in\{1,2,\cdots,k\}$ we obtain that the restriction
of $\beta_m$ to $V_1\times V_1$ satisfies
$$\lim_{m\rightarrow\infty}\beta_m\vert_{V_1\times V_1}=0$$ and consequently
\be\label{xx2}
\lim_{m\rightarrow\infty} (\beta_m\varowedge\beta_m)\vert_{V_1\times V_1\times V_1\times V_1}=0.
\ee
From the inequality 
$$
\frac{1}{4}\Big\Vert\Big(\beta_m\varowedge \beta_m-
\frac{\sca(\beta_m)}{n(n-1)}\langle\cdot,\cdot\rangle\varowedge \langle\cdot,\cdot\rangle\Big)
\big\vert_{V_1\times V_1\times V_1\times V_1}\Big\Vert^2
\leq\phi_\lambda(\beta_m),
$$
(\ref{xx2}) and the fact that $\lim_{m\rightarrow \infty}\phi_\lambda(\beta_m)=0$ we obtain $\sca(\beta)=0$,
which contradicts (\ref{Eq111}). 
Thus, we have proved that $\phi_\lambda(\beta)>0$ and so $\phi_\lambda$ attains a positive minimum on $\Sigma_{n,k}$ which obviously depends only on $n,k$ and $\lambda$ and is denoted by $\varepsilon(n,k,\lambda)$. 

Now, let $\beta\in \mathrm{Sym}(V\times V, W)$. Assume that $\psi(\beta)\neq 0$ and set $\gamma=\beta/(\psi(\beta))^{1/n}.$ Clearly 
$\gamma\in\Sigma_{n,k}$, and consequently $\phi_\lambda(\gamma)\geq \varepsilon(n,k,\lambda).$ Since $\phi_\lambda$ is 
homogeneous of degree $4$, the desired inequality is obviously fulfilled. In the case where $\psi(\beta)=0$, the inequality is trivial. \qed
\end{proof}

\bigskip

We also need the following result on flat bilinear forms, which is due to Moore 
\cite[Proposition 2]{JDM2}.

\begin{lemma}\po\label{JDMLe}
Let $\beta\in\mathrm{Sym}(V\times V,U)$ be a flat bilinear form with respect to
 a Lorentzian inner product of $U$. If $\dim V>\dim U$ and $\beta(x,x)\neq 0$ 
 for all non-zero $x\in V$, then there is a non-zero isotropic vector $e\in U$ and 
 a bilinear form $\phi\in \mathrm{Sym}(V\times V, \R)$ such that 
 $\dim \mathcal{N}(\beta-e\phi)\geq \dim V-\dim U+2.$
\end{lemma}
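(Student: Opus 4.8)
The plan is to argue by induction on $q:=\dim U$, after first reducing to the case where the first normal space $U_1:=\mathrm{span}\{\beta(x,y):x,y\in V\}$ equals all of $U$. Throughout I would use the classical flat bilinear form lemma: a flat $\gamma\in\mathrm{Sym}(V\times V,\widetilde U)$ into a \emph{positive definite} space satisfies $\dim\mathcal{N}(\gamma)\geq\dim V-\dim\widetilde U$. Together with the hypothesis $\beta(x,x)\neq 0$ this has a convenient consequence that I will invoke repeatedly: a flat form into a positive definite space of dimension strictly less than $\dim V$ must annihilate some nonzero $x$, hence has $\gamma(x,x)=0$, so such a configuration cannot arise here.

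\emph{Reduction.} Assume $U_1\subsetneq U$. The inner product of $U$ restricts to $U_1$ in one of three ways: (a) positive definite --- impossible, since then $\beta$ is a flat form into a positive definite space of dimension $\dim U_1\le q<\dim V$; (b) nondegenerate of index $1$ --- then $\dim U_1<q<\dim V$ and the induction hypothesis applied to $\beta\in\mathrm{Sym}(V\times V,U_1)$ produces a null $e\in U_1\subseteq U$ and $\phi$ with $\dim\mathcal{N}(\beta-e\phi)\ge\dim V-\dim U_1+2>\dim V-q+2$; or (c) degenerate --- then its radical is a null line $\mathbb{R}e$ (a totally isotropic subspace of a Lorentzian space has dimension at most one) and $e\perp U_1$, so $\langle\beta(\cdot,\cdot),e\rangle\equiv 0$. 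Choosing a null $\bar e$ with $\langle e,\bar e\rangle=1$ and setting $W=\{e,\bar e\}^{\perp}$ (positive definite, $\dim W=q-2$), write $\beta=e\phi_1+\beta_W$ with $\phi_1=\langle\beta(\cdot,\cdot),\bar e\rangle$ and $\beta_W$ the $W$-component. Since $\langle e,e\rangle=0$ and $W\perp e$, flatness of $\beta$ collapses to flatness of $\beta_W$ into the positive definite $W$, whence $\dim\mathcal{N}(\beta-e\phi_1)=\dim\mathcal{N}(\beta_W)\ge\dim V-(q-2)$, as required.

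\emph{The main case $U_1=U$.} Now no null vector of $U$ is orthogonal to the image of $\beta$, so each self-adjoint operator $A_\xi$ (defined by $\langle A_\xi x,y\rangle=\langle\beta(x,y),\xi\rangle$) with $\xi$ null is nonzero. For any null $e$ with dual $\bar e$, the above decomposition still gives $\mathcal{N}(\beta-e\phi_1)=\mathcal{N}(\phi_2)\cap\mathcal{N}(\beta_W)$ with $\phi_2=\langle\beta(\cdot,\cdot),e\rangle$, and indeed $\mathcal{N}(\beta-e\phi)\subseteq\mathcal{N}(\phi_2)$ for \emph{every} $\phi$; so the task is to choose the null direction $e$ so that $\mathrm{rank}\,A_e\le q-2$ while $\beta_W$ stays flat on a large enough subspace. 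This is where $\beta(x,x)\neq 0$ is used. I would split on the sign of the continuous function $Q(x)=\langle\beta(x,x),\beta(x,x)\rangle$ on the connected set $V\setminus\{0\}$. If $Q$ takes the value $0$, there is $x_0\neq 0$ with $e:=\beta(x_0,x_0)$ a nonzero null vector, and the flatness identity $\langle\beta(x_0,y),\beta(x_0,z)\rangle=\langle e,\beta(y,z)\rangle$ realizes $\phi_2$ as the pullback of the metric of $U$ by $\beta(x_0,\cdot)\colon V\to U$; one then estimates $\dim\mathcal{N}(\phi_2)$ and the behaviour of $\beta_W$ on it through a rank analysis of this map, varying $x_0$ within its null locus if necessary. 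If $Q$ has constant sign, say $Q>0$, splitting off a unit timelike $t$ writes $\beta=t\,h+\beta_W$ with $\beta_W(x,x)\neq 0$ for all $x\neq 0$ and $\beta_W\varowedge\beta_W=h\varowedge h$; the consequence of the flat lemma noted above forces $\mathrm{rank}\,h\ge 2$, and one exploits the identity $\beta_W\varowedge\beta_W=h\varowedge h$ to produce the desired null direction.

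\emph{Main obstacle.} The reduction step and the easy corollary of the flat lemma are routine; the real work --- and the substance of Moore's Proposition 2 --- is the case $U_1=U$, where one must exhibit a null $e$ and a scalar form $\phi$ realizing the \emph{sharp} bound $\dim\mathcal{N}(\beta-e\phi)\ge\dim V-\dim U+2$. Taking $e=\beta(x_0,x_0)$ for an arbitrary null value is not enough: the linear map $\beta(x_0,\cdot)$ can be onto $U$, which would only yield $\dim V-\dim U$. One has to pick the null direction so that the associated shape operator has rank at most $\dim U-2$ while simultaneously controlling the $W$-component, and I expect this to require a nested induction (or a careful dimension count within the family $\{A_\xi\}_{\xi\in U}$), which is the only genuinely delicate point.
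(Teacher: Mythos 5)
The paper does not prove this lemma itself; it is quoted verbatim from Moore \cite[Proposition 2]{JDM2}, so there is no in-paper argument to measure your proposal against. Taken on its own terms, your reduction to the case $U_1=U$ is correct and complete: the positive-definite case contradicts $\beta(x,x)\neq 0$ via the flat-form nullity bound, the proper Lorentzian subspace case reduces by induction, and the degenerate case is fully dispatched (the radical of $U_1$ is a null line $\mathbb{R}e$ with $\langle\beta,e\rangle\equiv 0$, and since $\langle e,e\rangle=0$ and $e\perp W$ the decomposition $\beta=e\phi_1+\beta_W$ transfers flatness of $\beta$ to flatness of $\beta_W$ into the positive-definite $W$, giving the bound directly). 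The peripheral facts you invoke are also right, including $\mathrm{rank}\,h\geq 2$ in the $Q>0$ subcase, since a symmetric form of rank $\leq 1$ has vanishing Kulkarni--Nomizu square.

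The gap is the main case $U_1=U$, which you flag yourself, and it is genuine. Neither subcase in the sign-splitting of $Q$ is carried to a conclusion: if $Q$ vanishes somewhere, setting $e=\beta(x_0,x_0)$ does not by itself force $\mathrm{rank}\,A_e\leq q-2$, and ``varying $x_0$ within its null locus if necessary'' names the task rather than performing it; if $Q$ has constant sign, the identity $\beta_W\varowedge\beta_W=h\varowedge h$ is correctly derived but never converted into the required null direction. There is also a further difficulty the sketch does not confront: even granting a null $e$ with $\mathrm{rank}\,A_e\leq q-2$, one must control $\mathcal{N}(\phi_2)\cap\mathcal{N}(\beta_W)$, and flatness of $\beta$ only yields flatness of $\beta_W$ restricted to $\mathcal{N}(\phi_2)\times\mathcal{N}(\phi_2)$ --- which bounds the nullity of that restriction, not the intersection with $\mathcal{N}(\beta_W)$ taken in all of $V$. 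This remaining work is precisely the substance of Moore's argument, and as written the proposal does not supply it.
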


We define the map 
${\sf W}\colon \mathrm{Sym}(V\times V,W)\rightarrow\mathrm{Hom}(V\times V\times V\times V,\R)$ 
by 
$${\sf W}(\beta) = {\sf R}(\beta)-{\sf L}(\beta)\varowedge \langle\cdot,\cdot\rangle,$$ where 
$${\sf L}(\beta)=\frac{1}{n-2}\Big({\sf Ric}(\beta)-\frac{{\sf scal}(\beta)}{2(n-1)}\langle\cdot,\cdot\rangle\Big),$$

The following lemma is in fact contained in \cite{JDM2}. For the sake of completeness we give
a short proof.
 
\begin{lemma}\po\label{JDMLe2}
Let $\beta\in \mathrm{Sym}(V\times V, W)$ be a bilinear form
 and $\dim W< \dim V-2$. If ${\sf W}(\beta)=0$, 
then there exists a vector $\xi\in W$ 
and a subspace $V_1\subseteq V$ such that 
$$\dim V_1\geq \dim V-\dim W$$
and $$\beta(x,y)=\langle x,y\rangle \xi,\ \ 
\text{for all}\ x\in V_1\ \text{and}\ y\in V.$$
\end{lemma}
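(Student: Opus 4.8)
The plan is to reduce the statement to an application of Lemma \ref{JDMLe} by encoding the vanishing of ${\sf W}(\beta)$ as the flatness of a suitable bilinear form valued in a Lorentzian space. First I would observe that, exactly as in the Gauss-equation setting of Moore, the hypothesis ${\sf W}(\beta)=0$ says that the $(0,4)$-tensor ${\sf R}(\beta)=\tfrac12\beta\varowedge\beta$ is pure trace, i.e.\ ${\sf R}(\beta)={\sf L}(\beta)\varowedge\langle\cdot,\cdot\rangle$ for the Schouten-type bilinear form ${\sf L}(\beta)$. The right-hand side has the algebraic form of the curvature operator of a ``bilinear form'' of the shape $e\,\phi+\langle\cdot,\cdot\rangle\,\eta$ in a space with one extra negative direction: concretely, enlarge $W$ to $U=W\oplus\R^2$ equipped with a Lorentzian inner product (positive on $W$, and a hyperbolic plane $\mathrm{span}\{e,e^*\}$ with $\langle e,e\rangle=\langle e^*,e^*\rangle=0$, $\langle e,e^*\rangle=1$), and set
$$
\widetilde\beta(x,y)=\beta(x,y)+e\,\phi(x,y)+e^*\langle x,y\rangle,
$$
where $\phi\in\mathrm{Sym}(V\times V,\R)$ is chosen so that $\widetilde\beta\varowedge\widetilde\beta=0$. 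Expanding $\widetilde\beta\varowedge\widetilde\beta$ and using that $\beta\varowedge\beta=2\,{\sf L}(\beta)\varowedge\langle\cdot,\cdot\rangle$ shows that the choice $\phi=-{\sf L}(\beta)$ works: the cross term $2\,e\,(\phi\varowedge\langle\cdot,\cdot\rangle)$ coming from $e^*$ paired with $e$ cancels $\beta\varowedge\beta$, the $e\varowedge e$ and $e^*\varowedge e^*$ terms vanish since $e,e^*$ are isotropic, and the $e\varowedge e^*$ term contributes a multiple of $\phi\varowedge\langle\cdot,\cdot\rangle$ plus $\langle\cdot,\cdot\rangle\varowedge\langle\cdot,\cdot\rangle$ which can be absorbed; after fixing the remaining scalar freedom one gets $\widetilde\beta\varowedge\widetilde\beta=0$, i.e.\ $\widetilde\beta$ is flat with respect to the Lorentzian inner product of $U$.

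Next I would dispose of the degenerate case. If $\widetilde\beta(x,x)=0$ for some nonzero $x\in V$, then $\beta(x,x)=0$, $\phi(x,x)=0$ and $\langle x,x\rangle=0$; the last is impossible since the inner product on $V$ is positive definite. Hence $\widetilde\beta(x,x)\neq0$ for all nonzero $x$, and since $\dim U=\dim W+2<\dim V$ by hypothesis, Lemma \ref{JDMLe} applies: there is a nonzero isotropic vector $\tilde e\in U$ and $\widetilde\phi\in\mathrm{Sym}(V\times V,\R)$ with
$$
\dim\mathcal{N}(\widetilde\beta-\tilde e\,\widetilde\phi)\ \geq\ \dim V-\dim U+2\ =\ \dim V-\dim W.
$$
Let $V_1=\mathcal{N}(\widetilde\beta-\tilde e\,\widetilde\phi)$. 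For $x\in V_1$ and arbitrary $y\in V$ we then have $\widetilde\beta(x,y)=\widetilde\phi(x,y)\,\tilde e$. Now I would read off the components in the decomposition $U=W\oplus\mathrm{span}\{e,e^*\}$: the $W$-component gives $\beta(x,y)=\widetilde\phi(x,y)\,\pi_W(\tilde e)$, the $e^*$-coefficient gives $\langle x,y\rangle=\widetilde\phi(x,y)\,\langle\tilde e,e\rangle$, and (if needed) the $e$-coefficient gives $-{\sf L}(\beta)(x,y)=\widetilde\phi(x,y)\,\langle\tilde e,e^*\rangle$. From the $e^*$-relation, $\langle\tilde e,e\rangle\neq0$ (else $\langle x,y\rangle\equiv0$ on $V_1\times V$, impossible) and $\widetilde\phi(x,y)=\langle x,y\rangle/\langle\tilde e,e\rangle$ on $V_1\times V$. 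Substituting back into the $W$-relation gives $\beta(x,y)=\langle x,y\rangle\,\xi$ with $\xi=\pi_W(\tilde e)/\langle\tilde e,e\rangle\in W$ fixed, for all $x\in V_1$ and $y\in V$, which is exactly the claim.

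The step I expect to require the most care is the algebra of choosing $\phi$ and pinning down the extra scalar so that $\widetilde\beta\varowedge\widetilde\beta$ vanishes identically (not merely up to a $\langle\cdot,\cdot\rangle\varowedge\langle\cdot,\cdot\rangle$ term): one must bookkeep all nine Kulkarni--Nomizu cross terms from the three summands of $\widetilde\beta$, use the isotropy relations $\langle e,e\rangle=\langle e^*,e^*\rangle=0$, $\langle e,e^*\rangle=1$, and match against the identity ${\sf R}(\beta)={\sf L}(\beta)\varowedge\langle\cdot,\cdot\rangle$ guaranteed by ${\sf W}(\beta)=0$. This is the computational heart and is precisely the point where Moore's argument in \cite{JDM2} is invoked; everything after the application of Lemma \ref{JDMLe} is the routine component extraction sketched above. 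A minor point to double-check is that $\dim W<\dim V-2$ is exactly what makes $\dim V>\dim U$ hold, so the hypotheses of Lemma \ref{JDMLe} are met with no room to spare.
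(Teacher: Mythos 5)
Your proposal is correct and follows essentially the same route as the paper: pass to $U=W\oplus\R^2$ with a Lorentzian (hyperbolic-plane) inner product, define $\widetilde\beta=\beta-e\,{\sf L}(\beta)+e^*\langle\cdot,\cdot\rangle$, observe that ${\sf W}(\beta)=0$ makes $\widetilde\beta$ flat, apply Moore's Lemma~\ref{JDMLe}, and read off the components on $V_1=\mathcal{N}(\widetilde\beta-\tilde e\,\widetilde\phi)$. The only minor blemish is in the narrative of the Kulkarni--Nomizu bookkeeping: since the $e\phi$ and $e^*\langle\cdot,\cdot\rangle$ summands are mutually isotropic except in the $\langle e,e^*\rangle$ pairing, the cross term is exactly $2\,\phi\varowedge\langle\cdot,\cdot\rangle$, so $\widetilde\beta\varowedge\widetilde\beta=\beta\varowedge\beta+2\phi\varowedge\langle\cdot,\cdot\rangle=2({\sf L}(\beta)+\phi)\varowedge\langle\cdot,\cdot\rangle$ and $\phi=-{\sf L}(\beta)$ works on the nose; there is no extra $\langle\cdot,\cdot\rangle\varowedge\langle\cdot,\cdot\rangle$ term and no residual scalar freedom to fix.
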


\begin{proof}
We endow the vector space 
$\widetilde{W}=W\oplus \R^2$ with the Lorentzian inner product 
$\langle\langle\cdot,\cdot\rangle\rangle$ given by
$$\langle\langle\ (\xi,(s_1,s_2)), (\eta,(t_1,t_2))\  \rangle\rangle=
\langle \xi,\eta\rangle+s_1t_2+s_2t_1$$ and define the symmetric 
bilinear form $\widetilde{\beta}\colon V\times V\rightarrow \widetilde{W}$ 
by $$\widetilde{\beta}(x,y)=\big(\beta(x,y),\langle x,y \rangle, -{\sf L}(\beta)(x,y)\big).$$ 
Since $\sf{W}(\beta)=0$ it follows that $\widetilde{\beta}$ is flat with 
respect to $\langle\langle\cdot , \cdot \rangle\rangle$. From Lemma 
\ref{JDMLe}, we know that there exists a non-zero isotropic vector 
$e=(\eta,(s,t))\in\widetilde{W}$ and a symmetric bilinear form 
$\phi:V\times V\rightarrow \mathbb{R}$ such that 
$\dim \mathcal{N}(\widetilde\beta-\phi e)\geq \dim V-\dim W$. 
By setting $V_1=\mathcal{N}(\widetilde{\beta}-e\phi)$, we have that 
$\widetilde{\beta}(x,y)=\phi(x,y) e,$ or equivalently 
$$\beta(x,y) = \phi(x,y)\eta, \ \ \langle x,y \rangle = s\phi(x,y) \ \ 
\text{and}\ \ {\sf L}(\beta)(x,y)=-t\phi(x,y),$$ 
for all $x\in V_1\ \text{and}\ y\in V.$ Therefore, 
$\beta(x,y) = \langle x,y \rangle \xi$,
where $\xi=(1/s)\eta$. \qed
\end{proof}

\medskip

When $2\leq k\leq [(n-2)/2]$, then for each $\beta\in \mathrm{Sym}(V\times V,W)$, 
we define the subset $\Omega(\beta)$ of the unit $(k-1)$-sphere 
$\Sp^{k-1}$ in $W$ given by
$$\Omega(\beta)=\{u\in\mathbb{S}^{k-1}\ :\ k<\mathrm{Index}\ \beta^\sharp(u)<n-k\}.$$

The following proposition is crucial for the proof of Theorem \ref{MainTh}.

\begin{proposition}\po\label{CrProp}
Given integers $2\leq k\leq [(n-2)/2]$ and $ \lambda\in(1/n,1)$, there exists a 
positive constant $\varepsilon_1(n,k,\lambda)$ such that the following inequality holds 
\begin{equation}\label{CrIneq}
\Vert \mathsf{W}(\beta)\Vert^2+\big(\Vert\beta\Vert^2-\lambda\vert\tr \ {\beta}\vert^2\big)_+^2
\geq \varepsilon_1(n,k,\lambda)\Big(\int_{\Omega(\beta)}|\det\beta^\sharp(u)|\ d\Sp_u\Big)^{4/n},
\end{equation}
 for all $\beta\in \mathrm{Sym}(V\times V,W)$.
 \end{proposition}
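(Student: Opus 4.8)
The plan is to mimic the proof of Proposition \ref{CrProp2}, replacing the tensor $\beta\varowedge\beta-(\sca(\beta)/n(n-1))\langle\cdot,\cdot\rangle\varowedge\langle\cdot,\cdot\rangle$ by the Weyl tensor $\mathsf{W}(\beta)$, the set $\Phi(\beta)$ by $\Omega(\beta)$, and Lemma \ref{ThVLe} by Lemma \ref{JDMLe2}. Precisely, define $\phi_\lambda(\beta)=\Vert\mathsf{W}(\beta)\Vert^2+(\Vert\beta\Vert^2-\lambda\vert\tr\beta\vert^2)_+^2$ and $\psi(\beta)=\int_{\Omega(\beta)}\vert\det\beta^\sharp(u)\vert\,d\Sp_u$, and put $\Sigma_{n,k}=\{\beta:\psi(\beta)=1\}$. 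Both $\phi_\lambda$ and $\psi$ are continuous (the continuity of $\psi$, which will be used tacitly, follows from the lower semicontinuity of the index and a measure-zero argument on the boundary of $\Omega$) and $\phi_\lambda$ is homogeneous of degree $4$, so it suffices to show $\phi_\lambda$ attains a positive minimum on $\Sigma_{n,k}$. Take a minimizing sequence $\{\beta_m\}\subset\Sigma_{n,k}$ with $\phi_\lambda(\beta_m)\to\inf\phi_\lambda(\Sigma_{n,k})\geq 0$.

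First I would prove boundedness of $\{\beta_m\}$. If not, after normalizing $\widehat\beta_m=\beta_m/\Vert\beta_m\Vert$ we may assume $\widehat\beta_m\to\widehat\beta$ with $\Vert\widehat\beta\Vert=1$, and homogeneity gives $\phi_\lambda(\widehat\beta)=0$, i.e. $\mathsf{W}(\widehat\beta)=0$ and $1\leq\lambda\vert\tr\widehat\beta\vert^2=\lambda(\sca(\widehat\beta)+1)$, so $\sca(\widehat\beta)>0$ since $\lambda<1$. Now $\dim W=k\leq[(n-2)/2]<\dim V-2=n-2$, so Lemma \ref{JDMLe2} applies: there is $\xi\in W$ and a subspace $V_1$ with $\dim V_1\geq n-k$ such that $\widehat\beta(x,y)=\langle x,y\rangle\xi$ for $x\in V_1$, $y\in V$. (One should record that $\xi\neq 0$: indeed $\mathsf{Ric}(\widehat\beta)$ restricted to $V_1$ equals $(\dim V_1-1)\vert\xi\vert^2+\ldots$ times the metric, and $\sca(\widehat\beta)>0$ would be impossible with $\xi=0$; alternatively note $\mathsf{R}(\widehat\beta)$ vanishes on $V_1$ if $\xi=0$, forcing small scalar curvature by the dimension count, exactly as in the $\Sigma_{n,k}$ argument below.) Then, just as in Proposition \ref{CrProp2}, since each $\beta_m\in\Sigma_{n,k}$ there is an open $\widehat{\CU}_m\subseteq\Omega(\widehat\beta_m)$ on which $\det\widehat\beta_m^\sharp$ is nonzero; for any convergent sequence $\widehat u_m\in\widehat{\CU}_m$ with limit $\widehat u$, the constraint $\mathrm{Index}\,\widehat\beta_m^\sharp(\widehat u_m)<n-k$ forces, via $\widehat\beta_m^\sharp(\widehat u_m)\to\widehat\beta^\sharp(\widehat u)$ and $(1/s)\eta=\xi$, the conclusion $\langle\xi,\widehat u\rangle=0$ (if $\langle\xi,\widehat u\rangle>0$ then $\widehat\beta^\sharp(\widehat u)\vert_{V_1}$ is positive definite, so $\widehat\beta_m^\sharp(\widehat u_m)$ has $\geq n-k$ positive eigenvalues and, by nonvanishing determinant on $\widehat{\CU}_m$, at most $k$ negative ones, contradicting $\mathrm{Index}>k$; similarly $\langle\xi,\widehat u\rangle<0$ is excluded). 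Choosing $k$ such sequences spanning $W$ we get $\widehat\beta_m\vert_{V_1\times V_1}\to 0$, hence $\mathsf{R}(\widehat\beta_m)\vert_{V_1^{\times 4}}\to 0$; comparing with $\Vert\mathsf{W}(\widehat\beta_m)\vert_{V_1^{\times 4}}\Vert^2\leq\phi_\lambda(\widehat\beta_m)\to 0$ and using that on a space of dimension $\geq n-k$ a curvature-type tensor with vanishing Weyl part and vanishing full curvature must have vanishing scalar curvature contribution, one derives $\sca(\widehat\beta)=0$, contradicting $\sca(\widehat\beta)>0$. (This last implication is the one place where the argument differs in bookkeeping from Proposition \ref{CrProp2}: there $\mathsf{R}(\widehat\beta_m)=\tfrac12\widehat\beta_m\varowedge\widehat\beta_m$ is manifestly controlled, whereas here I must pass through $\mathsf{R}(\beta)=\mathsf{W}(\beta)+\mathsf{L}(\beta)\varowedge\langle\cdot,\cdot\rangle$ and track $\mathsf{L}$; the Schur-type identity that ties $\mathrm{tr}$ of $\mathsf{Ric}$ on a large subspace to $\sca$ makes this go through.) So $\{\beta_m\}$ is bounded and, up to a subsequence, $\beta_m\to\beta$.

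Next I would show $\phi_\lambda(\beta)>0$. Assume not; then $\mathsf{W}(\beta)=0$ and $\Vert\beta\Vert^2\leq\lambda\vert\tr\beta\vert^2=\lambda(\sca(\beta)+\Vert\beta\Vert^2)$. First $\beta\neq 0$: the normalization $\vert\det\beta_m^\sharp(\xi_m)\vert\,\Vol(\Omega(\beta_m))=1$ with $\xi_m\to\xi$ would give $\beta^\sharp(\xi)=0$, a contradiction, so $\beta\neq 0$; then the displayed inequality forces $\sca(\beta)\neq 0$, and $\sca(\beta)<0$ is impossible too (it would force $\Vert\beta\Vert^2<\lambda\Vert\beta\Vert^2$), hence $\sca(\beta)>0$. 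Wait --- I should be careful: unlike the setting of Lemma \ref{ThVLe}, Lemma \ref{JDMLe2} does not produce a curvature-pinched normal form with a definite sign of $\sca$, so the sign of $\sca(\beta)$ here must be extracted purely from the inequality $\Vert\beta\Vert^2\le\lambda(\sca(\beta)+\Vert\beta\Vert^2)$, which indeed gives $\sca(\beta)\ge\frac{1-\lambda}{\lambda}\Vert\beta\Vert^2>0$. Now apply Lemma \ref{JDMLe2} to $\beta$ (legitimate since $k<\dim V-2$): get $\xi\in W$ and $V_1$ with $\dim V_1\geq n-k$ and $\beta(x,y)=\langle x,y\rangle\xi$ on $V_1\times V$. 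Repeating verbatim the $\widehat{\CU}_m$-argument from the previous paragraph with $\beta$ in place of $\widehat\beta$ and $\CU_m\subseteq\Omega(\beta_m)$ (using $\beta_m^\sharp(u_m)\to\beta^\sharp(u)$, $\mathrm{Index}\,\beta_m^\sharp(u_m)>k$ and $<n-k$, and $\det\beta_m^\sharp\neq 0$ on $\CU_m$) yields $\beta_m\vert_{V_1\times V_1}\to 0$, whence $\mathsf{R}(\beta_m)\vert_{V_1^{\times 4}}\to 0$; combined with $\Vert\mathsf{W}(\beta_m)\vert_{V_1^{\times 4}}\Vert^2\leq\phi_\lambda(\beta_m)\to 0$ and the Schur-type identity this forces $\sca(\beta)=0$, contradicting $\sca(\beta)>0$. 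Therefore $\phi_\lambda(\beta)>0$, $\phi_\lambda$ attains a positive minimum $\varepsilon_1(n,k,\lambda)$ on $\Sigma_{n,k}$, and a final homogeneity-of-degree-$4$ rescaling argument (with $\gamma=\beta/(\psi(\beta))^{1/n}$ when $\psi(\beta)\neq 0$, and the trivial case $\psi(\beta)=0$) yields \eqref{CrIneq}.

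The main obstacle is the step that replaces Lemma \ref{ThVLe} by Lemma \ref{JDMLe2}: the Vlachos lemma gave an honest pointwise model $\beta=\sqrt{\mu}\langle\cdot,\cdot\rangle\xi$ on a large subspace \emph{with} $\mu>0$, so the positivity of $\sca$ and the contradiction came almost for free; Moore's lemma only gives $\beta(x,y)=\langle x,y\rangle\xi$ on $V_1$ with no a priori control on $\xi$ or on the sign of $\sca(\beta)$. Extracting the sign of $\sca$ solely from the pinching inequality, and then running the "$\mathsf{R}$ restricted to a subspace of dimension $\geq n-k$ vanishes $\Rightarrow\sca=0$" implication through the decomposition $\mathsf{R}=\mathsf{W}+\mathsf{L}\varowedge\langle\cdot,\cdot\rangle$ rather than through $\tfrac12\beta\varowedge\beta$, is where the real care is needed; everything else is a transcription of the compactness argument already carried out for Proposition \ref{CrProp2}.
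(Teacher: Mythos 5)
Your proposal follows the paper's proof of Proposition \ref{CrProp} essentially verbatim: the same compactness argument on $\Sigma_{n,k}$, the same replacement of Lemma \ref{ThVLe} by Lemma \ref{JDMLe2}, and the same sequence argument on the open sets $\widehat{\CU}_m\subseteq\Omega(\widehat{\beta}_m)$. The one place where the paper is more explicit than your sketch is the step you tuck into the phrase ``Schur-type identity'': the paper records (its displays \eqref{xx3} and \eqref{xx4}) that $\widehat{\beta}_m\vert_{\widehat{V}_1\times\widehat{V}_1}\to 0$ yields simultaneously $\mathsf{R}(\widehat{\beta}_m)\vert_{\widehat{V}_1\times\widehat{V}_1\times\widehat{V}_1\times\widehat{V}_1}\to 0$ and $\mathsf{L}(\widehat{\beta}_m)\vert_{\widehat{V}_1\times\widehat{V}_1}\to -\tfrac{\sca(\widehat{\beta})}{2(n-1)(n-2)}\langle\cdot,\cdot\rangle\vert_{\widehat{V}_1\times\widehat{V}_1}$, so that $\Vert\mathsf{W}(\widehat{\beta}_m)\vert_{\widehat{V}_1\times\widehat{V}_1\times\widehat{V}_1\times\widehat{V}_1}\Vert^2\leq\phi_\lambda(\widehat{\beta}_m)\to 0$ forces $\sca(\widehat{\beta})\,\langle\cdot,\cdot\rangle\vert_{\widehat{V}_1}\varowedge\langle\cdot,\cdot\rangle\vert_{\widehat{V}_1}=0$, hence $\sca(\widehat{\beta})=0$ since $\dim\widehat{V}_1\geq n-k\geq 2$. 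Your parenthetical that one should record $\widehat{\xi}\neq 0$ is an observation the paper does not make and does not need: it is in fact correct (granting $\mathsf{W}(\widehat{\beta})=0$ and $\sca(\widehat{\beta})>0$, the very computation above shows $\widehat{\xi}=0$ would give $\sca(\widehat{\beta})=0$), but the subsequent sequence argument forces $\widehat{\beta}\vert_{\widehat{V}_1\times\widehat{V}_1}=0$ regardless, and it is this vanishing --- not the nonvanishing of $\widehat{\xi}$ --- that the paper feeds into the decomposition $\mathsf{R}=\mathsf{W}+\mathsf{L}\varowedge\langle\cdot,\cdot\rangle$ to obtain the contradiction.
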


\begin{proof}
We consider the functions 
$\phi_\lambda,\psi\colon \mathrm{Sym}(V\times V,W)\rightarrow \mathbb{R}$ defined by
$$\phi_\lambda(\beta) = \Vert \mathsf{W}(\beta)\Vert^2+
\big(\Vert\beta\Vert^2-\lambda\vert\tr \ {\beta}\vert^2\big)_+^2\ \ \ 
\text{and}\ \ \ \psi(\beta) = \int_{\Omega(\beta)}|\det\beta^\sharp(u)|\ d\Sp_u.$$
In order to prove the desired inequality, it is sufficient to show that $\phi_\lambda$ 
attaints a positive minimum on $\Sigma_{n,k}$, where
$$\Sigma_{n,k}=\{\beta\in\mathrm{Sym}(V\times V,W)\ :\ \psi(\beta)=1\}.$$\
There exists a sequence $\{\beta_m\}$ in $\Sigma_{n,k}$ such that 
$$\lim_{m\rightarrow\infty}\phi_\lambda(\beta_m)=\inf \phi_\lambda(\Sigma_{n,k})\geq 0.$$ 
We observe that $\beta_m\neq0$ for all $m\in\mb{N}$, since $\beta_m\in \Sigma_{n,k}$. 
Thus, we may write $\beta_m=\Vert\beta_m\Vert\widehat{\beta}_m,$ where 
$\Vert\widehat{\beta}_m\Vert=1$.

We claim that the sequence $\{\beta_m\}$ is bounded. Assume to the contrary that 
there exists a subsequence of $\{\beta_m\}$, which by abuse of notation is again 
denoted by $\{\beta_m\}$, such that $\lim_{m\rightarrow \infty}\Vert\beta_m\Vert=\infty.$ 
Since $\Vert\widehat{\beta}_m\Vert=1$, we may assume, by taking a subsequence if 
necessary, that $\{\widehat{\beta}_m\}$ converges to some $\widehat{\beta}\in\mathrm{Sym}(V\times V,W)$ 
with $\Vert\widehat{\beta}\Vert=1$. Using the fact that $\phi_\lambda$ is homogeneous of degree $4$, 
we obtain $\phi_\lambda(\widehat{\beta}_m)=\phi_\lambda(\beta_m)/\Vert\beta_m\Vert^4.$
Thus $\lim_{m\rightarrow\infty}\phi_\lambda(\widehat{\beta}_m)=0$ and consequently 
$\phi_\lambda(\widehat{\beta})=0$, or equivalently $\mathsf{W}(\widehat{\beta})=0$ and
\begin{equation}\label{CrEq2}
1\leq \lambda \vert\tr \  {\widehat\beta}\vert^2=\lambda(\sca(\widehat\beta)+1).
\end{equation}
According to Lemma \ref{JDMLe2}, we have that there exist a vector subspace $\widehat{V}_1$ 
of $V$ with $\dim \widehat{V}_1\geq n-k$ and a vector $\widehat{\xi}\in W$ such that 
\begin{equation}\label{CrEq3}
\widehat{\beta}(x,y)=\langle x,y\rangle\widehat{\xi} \ \ \text{for all}\ x\in \widehat{V}_1\ \text{and}\ y\in V.
\end{equation}
Moreover, since $\beta_m\in\Sigma_{n,k}$ there exists an open subset $\widehat{\CU}_m$ of 
$\Sp^{k-1}$ such that $\widehat{\CU}_m\subseteq \Omega(\widehat{\beta}_m)$ 
and $\det\widehat{\beta}_m^\sharp(u)\neq0$ for all $u\in \widehat{\CU}_m$ and 
$m\in \mathbb{N}.$

Let $\{\widehat{u}_m\}$ be a sequence such that $\widehat{u}_m\in \widehat{\CU}_m$ for all 
$m\in\mathbb{N}$. We may assume that 
$\{\widehat{u}_m\}$ is convergent, by passing if necessary to a subsequence and set 
$\widehat{u}=\lim_{m\rightarrow \infty}\widehat{u}_m$. Since 
$\lim_{m\rightarrow \infty}\widehat{\beta}_m^\sharp(\widehat{u}_m)=\widehat{\beta}^\sharp(\widehat{u})$ 
and $\widehat{u}_m\in\widehat{\CU}_m$ it follows that $\mathrm{Index}\ \widehat{\beta}^\sharp(\widehat{u})<n-k$.
Then, from (\ref{CrEq3}) we get $\langle \widehat{\xi},\widehat{u}\rangle\geq 0$. 
We claim that $\langle\widehat{\xi},\widehat{u}\rangle=0$. Indeed, if 
$\langle\widehat{\xi},\widehat{u}\rangle>0$ then (\ref{CrEq3}) implies that 
$\widehat{\beta}^\sharp(\widehat{u})$ has at least $n-k$ positive eigenvalues and so, 
for $m$ large enough $\widehat{\beta}_m^\sharp(\widehat{u}_m)$ has at least 
$n-k$ positive eigenvalues. On account of the fact that $\det \widehat{\beta}_m^\sharp(u)\neq 0$ 
for all $u\in \widehat{\CU}_m$ and $m\in \mathbb{N}$, we have that 
$\widehat{\beta}_m^\sharp(\widehat{u}_m)$ has at most $k$ negative eigenvalues,
 which is a contradiction, since $\widehat{u}_m\in \widehat{\CU}_m$ for all $m\in \mathbb{N}$. 

Thus, we have proved that for any convergent sequence $\{\widehat{u}_m\}$ such that $\widehat{u}_m\in \widehat{\CU}_m$ for all $m\in\mb{N}$, we have $\langle\lim_{m\rightarrow \infty}\widehat{u}_m,\widehat{\xi}\rangle=0$.
Since $\widehat{\CU}_m$ is open, we may choose convergent sequences  
$\{\widehat{u}_m^{(1)}\},\{\widehat{u}_m^{(2)}\},\cdots,\{\widehat{u}_m^{(k)}\}$ in $\widehat{\CU}_m$ 
such that $\widehat{u}_m^{(1)}, \widehat{u}_m^{(2)},\cdots, \widehat{u}_m^{(k)}$ span $W$ for all $m\in \mathbb{N}.$ Then, from (\ref{CrEq3}) and the fact that $\langle\lim_{m\rightarrow \infty}\widehat{u}_m^{(a)},\widehat{\xi}\rangle=0$ for all $a\in\{1,2,\cdots,k\}$ we have
that the restriction of $\widehat{\beta}_m$ to $\widehat{V}_1\times \widehat{V}_1$ satisfies
$$\lim_{m\rightarrow\infty} \widehat{\beta}_m\vert_{\widehat{V}_1\times\widehat{V}_1}=0$$ and consequently
\be\label{xx3}
\lim_{m\rightarrow\infty}{\sf R}(\widehat{\beta}_m)\vert_{\widehat{V}_1\times \widehat{V}_1\times \widehat{V}_1\times \widehat{V}_1} 
=0 \ \ \text{and}\ \
\lim_{m\rightarrow\infty} {\sf L }(\widehat{\beta}_m)\vert_{\widehat{V}_1\times \widehat{V}_1} = -\frac{\sca(\widehat{\beta})}{2(n-1)(n-2)}\left.\langle \cdot,\cdot\rangle\right\vert_{\widehat{V}_1\times \widehat{V}_1}.
\ee
From the inequality 
$$
\Vert\mathsf{W}(\widehat{\beta}_m)\vert_{\widehat{V}_1\times\widehat{V}_1\times
\widehat{V}_1\times\widehat{V}_1}\Vert^2\leq\phi_\lambda(\widehat{\beta}_m),
$$
(\ref{xx3}) and the fact that $\lim_{m\rightarrow \infty}\phi_\lambda(\widehat{\beta}_m)=0$ we obtain $\sca(\widehat{\beta})=0$,
which contradicts (\ref{CrEq2}). Thus, the sequence $\{\beta_m\}$ is bounded and converges to some $\beta \in\mathrm{Sym}(V\times V,W)$, by taking a subsequence if necessary.

We claim that $\phi_\lambda(\beta)>0$. Arguing indirectly, we assume that $\phi_\lambda(\beta)=0$. Then, we have $\mathsf{W}(\beta)=0$ and 
\begin{equation}\label{CrEq4}
\Vert\beta\Vert^2\leq \lambda \vert\tr \  {\beta}\vert^2=\lambda(\sca(\beta)+\Vert\beta\Vert^2).
\end{equation}
According to Lemma \ref{JDMLe2}, there exist a vector subspace $V_1$ of $V$ with $\dim V_1\geq n-k$ and a vector $\xi\in W$ such that 
\begin{equation}\label{CrEq5}
\beta(x,y)=\langle x,y\rangle\xi \ \ \text{for all}\ x\in V_1\ \text{and}\ y\in V.
\end{equation}
We notice that $\beta\neq0$. Indeed, if $\beta=0$, then $\beta^\sharp(u)=0$ for all $u\in \Sp^{k-1}$. Since $\beta_m\in \Sigma_{n,k}$ for all $m\in\mb{N}$ there exists $\xi_m\in \Omega(\beta_m)$ such that 
\begin{equation}\label{CrEq6}
|\det\beta_m^\sharp(\xi_m)|\Vol(\Omega(\beta_m))=1\ \ \text{for all} \ m\in\mathbb{N}.
\end{equation}
We may assume that the sequence $\{\xi_m\}$ converges to some $\xi\in\Sp^{k-1}$, by passing again to a subsequence if necessary. Then $\lim_{m\rightarrow \infty}\beta_m^\sharp(\xi_m)=\beta^\sharp(\xi)=0,$ which contradicts (\ref{CrEq6}). Therefore $\beta\neq0$.
   
From the fact that $\beta_m\in \Sigma_{n,k}$ for all $m\in \mathbb{N}$, we deduce that there exists an open subset $\CU_m$ of $\Sp^{k-1}$ such that
$\CU_m\subseteq \Omega(\beta_m)$ and $\det\beta_m^\sharp(u)\neq0$ for all $u\in \CU_m$ and $m\in \mathbb{N}.$
Let $\{u_m\}$ be a sequence such that $u_m\in \CU_m$ for all $m\in\mathbb{N}$. We may assume that $u_m$ is convergent, by passing if necessary to a subsequence, and set $u=\lim_{m\rightarrow \infty}u_m$. Since $\lim_{m\rightarrow \infty}\beta_m^\sharp(u_m)=\beta^\sharp(u)$ and $u_m\in\CU_m$, it follows that $\mathrm{Index}\ \beta^\sharp(u)<n-k$. Then, from (\ref{CrEq5}) we get $\langle \xi,u\rangle\geq 0$. We claim that  $\langle\xi,u\rangle=0$. Indeed, if $\langle\xi,u\rangle>0$, then (\ref{CrEq5}) implies that $\beta^\sharp(u)$ has at least $n-k$ positive eigenvalues and so, for $m$ large enough, $\beta_m^\sharp(u_m)$ has at least $n-k$ positive eigenvalues. This and the fact that $\det \beta_m^\sharp(u) \neq 0$ for all $u\in \CU_m$ and $m\in \mathbb{N},$
shows that $\beta_m^\sharp(u_m)$ has at most $k$ negative eigenvalues, which is a contradiction, since $u_m\in \CU_m$ for all $m\in \mathbb{N}$. 

 Thus, we have proved that for any convergent sequence $\{u_m\}$ such that $u_m\in \CU_m$ for all $m\in\mb{N}$, we have $\langle\lim_{m\rightarrow \infty} u_m,\xi\rangle=0.$
Since $\CU_m$ is open, we may choose convergent sequences $\{u_m^{(1)}\},\{u_m^{(2)}\},\cdots,\{u_m^{(k)}\}$ in $\CU_m$ such that $u_m^{(1)}, u_m^{(2)},\cdots, u_m^{(k)}$ span $W$ for all $m\in \mathbb{N}.$ From (\ref{CrEq5}) and the fact that $\langle\lim_{m\rightarrow\infty} u_m^{(a)},\xi\rangle=0$ for all $a\in\{1,2,\cdots,k\}$, we obtain that the restriction of $\beta_m$ to $V_1\times V_1$ satisfies
$$\lim_{m\rightarrow\infty} \beta_m\vert_{V_1\times V_1}=0$$ and consequently
\be\label{xx4}
\lim_{m\rightarrow\infty}{\sf R}(\beta_m)\vert_{V_1\times V_1\times V_1\times V_1} 
=0 \ \ \text{and}\ \
\lim_{m\rightarrow\infty} {\sf L }(\beta_m)\vert_{V_1\times V_1} = -\frac{\sca(\beta)}{2(n-1)(n-2)}\left.\langle \cdot,\cdot\rangle\right\vert_{V_1\times V_1}.
\ee
From the inequality 
$$
\Vert\mathsf{W}(\beta_m)\vert_{V_1\times V_1\times V_1\times V_1}\Vert^2\leq\phi_\lambda(\beta_m),
$$
(\ref{xx4}) and the fact that $\lim_{m\rightarrow \infty}\phi_\lambda(\beta_m)=0$ we obtain $\sca(\beta)=0$,
which contradicts (\ref{CrEq4}). 
Thus, we have proved that $\phi_\lambda(\beta)>0$ and so $\phi_\lambda$ attains a positive minimum on $\Sigma_{n,k}$ which obviously depends
only on $n,k$ and $\lambda$ and is denoted by $\varepsilon_1(n,k,\lambda)$. 

Now, let $\beta\in \mathrm{Sym}(V\times V, W)$. Assume that $\psi(\beta)\neq 0$ and set $\gamma=\beta/(\psi(\beta))^{1/n}.$ Clearly 
$\gamma\in\Sigma_{n,k}$, and consequently $\phi_\lambda(\gamma)\geq \varepsilon_1(n,k,\lambda).$ Since $\phi_\lambda$ is 
homogeneous of degree $4$, the desired inequality is obviously fulfilled. In the case where $\psi(\beta)=0$, the inequality is trivial. \qed
\end{proof}

\begin{remark}\po\rm{
In the case where $\lambda\leq 1/n$, arguing as in the proof of Proposition \ref{CrProp2}, 
we have that there exist a positive constant $d(n,k)$ such that 
$$\big(\Vert\beta\Vert^2-\lambda\vert\tr \  {\beta}\vert^2\big)^2\geq 
\big(\Vert\beta\Vert^2-\frac{1}{n}\vert\tr \  {\beta}\vert^2\big)^2\geq 
d(n,k)\Big(\int_{\Sp^{k-1}}|\det\beta^\sharp(u)|\ d\Sp_u\Big)^{4/n}$$  
for any $\beta\in\mathrm{Sym}(V\times V,W)$.

However, if $\lambda\in(1/n,1)$ 
then the first term of the LHS of inequalities (\ref{CrIneq2}) 
and (\ref{CrIneq}) is essential and cannot be dropped. For instance,
let $n\geq 7, \ k=2$ and $\{\xi_1,\xi_2\}$ be an orthonormal 
basis of $W$. Consider $\beta\in\mathrm{Sym}(V\times V,W)$ defined by
$\beta(x,y)=\langle Ax,y\rangle \xi_1,$
where $A=\mathrm{diag}(a,a,-a,-a,\cdots,-a), \ a>0.$
For any $\lambda\in [n/(n-4)^2,1)$ we have that $\big(\Vert\beta\Vert^2-\lambda\vert\tr \  {\beta}\vert^2\big)_+=0$. 
Moreover, $\Phi(\beta)=\Sp^{1}\smallsetminus\{\pm\xi_2\}$ and this
shows that inequality (\ref{CrIneq2})  cannot hold by dropping the first term of the LHS.
 }
\end{remark}

\section{The proofs}

We recall some well known facts on the total curvature and how Morse 
theory provides restrictions on the Betti numbers.
Let $f\colon (M^n,g)\rightarrow \mb{R}^{n+k}$ be an isometric immersion of a 
compact, connected and oriented $n$-dimensional Riemannian manifold into 
the $(n+k)$-dimensional Euclidean space $\R^{n+k}$ equipped with the usual 
inner product $\langle\cdot,\cdot\rangle$. The normal bundle of $f$ is given by 
$$N_fM=\left\{(p,\xi)\in f^*(T\mb{R}^{n+k})\ :\ \xi\perp df_p(T_pM) \right\}$$ and 
the corresponding unit normal bundle is defined by 
$$UN_f=\left\{(p,\xi)\in N_fM\ :\ \vert\xi\vert=1\right\},$$
where $f^*(T\mb{R}^{n+k})$ is the induced bundle of $f$.

The {\it generalized Gauss map} $\nu\colon UN_f\rightarrow \mb{S}^{n+k-1}$ is 
defined by $\nu(p,\xi)=\xi$, where $\Sp^{n+k-1}$ is the unit $(n+k-1)$-dimensional 
sphere of $\R^{n+k}$. For each $u\in\mb{S}^{n+k-1}$, we consider the height function 
$h_u\colon M^n\rightarrow \mb{R}$ defined by $h_u(p)=\langle f(p),u\rangle,\ p\in M^n$. 
Since $h_u$ has a degenerate critical point if and only if $u$ is a critical point of the 
generalized Gauss map, by Sard's theorem there exists a subset $E\subset\Sp^{n+k-1}$ 
of measure zero such that $h_u$ is a Morse function for all $u\in\Sp^{n+k-1}\smallsetminus E$. 
For each $u\in \Sp^{n+k-1}\smallsetminus E$, we denote by $\mu_i(u)$ the number of critical 
points of $h_u$ of index $i$. We also set $\mu_{i}(u)=0$ for any $u\in E$. Following Kuiper 
\cite{Kuiper}, we define the {\it total curvature of index $i$ of $f$} by 
$$
\tau_i(f)=\frac{1}{\mathrm{Vol}(\mb{S}^{n+k-1})}\int_{\mb{S}^{n+k-1}}\mu_i(u)\ d\mb{S},
$$
where $d\Sp$ denotes the volume element of the sphere $\Sp^{n+k-1}$.

Let $\beta_i(M^n; \F)=\dim_\F H_i(M^n;\F)$ be the $i$-th Betti number of $M^n$ over an arbitrary coefficient field 
$\F$. From the weak Morse inequalities (cf. \cite{Milnor}) we have $\mu_i(u)\geq \beta_i(M^n;\F)$, 
for all $u\in \Sp^{n+k-1}\smallsetminus E$. By integrating over $\Sp^{n+k-1}$, we obtain 
\begin{equation}\label{TC}
\tau_i(f)\geq \beta_i(M^n;\F).  
\end{equation}

For each $(p,\xi)\in UN_f$, we denote by $A_{\xi}$ the shape operator of $f$ in the direction 
$\xi$ which is given by $$g (A_{\xi} X,Y)=\langle \alpha(X,Y),\xi\rangle,\ \ X,Y\in TM,$$ where 
 $\alpha$ is the second fundamental form of $f$ viewed as a section of the vector bundle 
 $\mathrm{Hom}(TM\times TM,N_f M)$. There is a natural 
volume element $d\Sigma$ on the unit normal bundle $UN_f$. In fact, if $dV$ is a $(k-1)$-form on 
$UN_f$ such that its restriction to a fiber of the unit normal bundle at $(p,\xi)$ is the volume
element of the unit $(k-1)$-sphere of the normal space of $f$ at $p$,
then $d\Sigma=dM\wedge dV$, where $dM$ is the volume element of $M^n$ with 
respect to the metric $g$. Furthermore, we have $$\nu^*(d\Sp)=G(p,\xi) d\Sigma,$$
where $G(p,\xi)=(-1)^n\det A_\xi$ is the Lipschitz-Killing curvature at $(p,\xi)\in UN_f$.

The total absolute curvature $\tau(f)$ of $f$ in the sense of Chern and Lashof is defined by 
$$
\tau(f)=\frac{1}{\Vol(\Sp^{n+k-1})}\int_{UN_f}\vert \nu^*(d\Sp)\vert=\frac{1}{\Vol(\Sp^{n+k-1})}
\int_{UN_f}\vert\det A_\xi\vert\ d\Sigma.
$$ 

We need the following result which is due to Chern and Lashof \cite{CL1,CL2}.
\begin{theorem}\po\label{ChLa}
Let $f\colon M^n\rightarrow \R^{n+k}$ be an isometric immersion of 
a compact, connected and oriented $n$-dimensional Riemannian manifold $M^n$ into 
$\R^{n+k}$. Then the total absolute curvature of $f$ satisfies the 
inequality 
$$\tau(f)\geq \sum_{i=0}^n\beta_{i}(M^n;\F).$$
\end{theorem}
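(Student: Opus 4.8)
The plan is to establish the identity $\tau(f)=\sum_{i=0}^{n}\tau_i(f)$ and then combine it with the elementary estimate (\ref{TC}) summed over $i$; the inequality $\tau(f)\geq\sum_{i=0}^{n}\beta_i(M^n;\F)$ is then immediate.

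First I would recall the standard dictionary between height functions and the generalized Gauss map. For $u\in\Sp^{n+k-1}$, a point $p\in M^n$ is a critical point of $h_u$ if and only if $u$ is orthogonal to $df_p(T_pM)$, that is, $(p,u)\in UN_f$; and at such a point the Hessian of $h_u$ equals $g(A_u\cdot,\cdot)$, so $p$ is a nondegenerate critical point of index $j$ precisely when $A_u$ is invertible with exactly $j$ negative eigenvalues. Consequently, for $u\in\Sp^{n+k-1}\smallsetminus E$ the fiber $\nu^{-1}(u)=\{(p,u)\in UN_f\}$ is finite and $\#\,\nu^{-1}(u)=\sum_{i=0}^{n}\mu_i(u)$.

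Next I would apply the area formula to the smooth map $\nu\colon UN_f\to\Sp^{n+k-1}$ between compact manifolds of equal dimension $n+k-1$. Since $\nu^*(d\Sp)=G(p,\xi)\,d\Sigma$ with $G(p,\xi)=(-1)^n\det A_\xi$, one has $|\nu^*(d\Sp)|=|\det A_\xi|\,d\Sigma$, and the area formula yields $\int_{UN_f}|\det A_\xi|\,d\Sigma=\int_{\Sp^{n+k-1}}\#\,\nu^{-1}(u)\,d\Sp(u)$. Because $E$ has measure zero by Sard's theorem, the right-hand side equals $\int_{\Sp^{n+k-1}}\big(\sum_{i=0}^{n}\mu_i(u)\big)\,d\Sp(u)$; dividing by $\Vol(\Sp^{n+k-1})$ gives $\tau(f)=\sum_{i=0}^{n}\tau_i(f)$. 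Invoking (\ref{TC}), namely $\tau_i(f)\geq\beta_i(M^n;\F)$ for each $i$, and summing over $i=0,\dots,n$ produces the assertion.

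I expect the only point requiring care to be the use of the area formula together with the identification $\#\,\nu^{-1}(u)=\sum_i\mu_i(u)$: away from the null set $E$ of critical values of $\nu$, every preimage point has invertible differential, so $\nu$ is a local diffeomorphism there with finite fibers, which is exactly the nondegeneracy of the associated critical points of $h_u$ recorded above. Everything else is routine — the sign $(-1)^n$ is irrelevant under the absolute value, and monotonicity of the integral does the rest.
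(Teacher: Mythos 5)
The paper does not prove Theorem~\ref{ChLa}; it simply cites it as a classical result of Chern and Lashof \cite{CL1,CL2}, so there is no in-paper proof for your argument to be compared against. Your reconstruction is correct and is the standard Chern--Lashof argument: the identification of $\nu^{-1}(u)$ with the critical set of $h_u$ (with the Hessian of $h_u$ at a critical point being $g(A_u\cdot,\cdot)$), the area formula applied to $\nu\colon UN_f\to\Sp^{n+k-1}$ (which amounts to summing the Shiohama--Xu identity (\ref{ShXu}) over $i=0,\dots,n$) to obtain $\tau(f)=\sum_{i=0}^{n}\tau_i(f)$, and then the weak Morse inequalities packaged as (\ref{TC}). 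Although (\ref{ShXu}) is displayed in the paper after the statement of Theorem~\ref{ChLa}, it is established independently, so there is no circularity in deriving the identity $\tau(f)=\sum_i\tau_i(f)$ this way.
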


Shiohama and Xu \cite[p. 381]{SX} proved that 
\begin{equation}\label{ShXu}
\int_{U^iN_f}|\mathrm{det}A_\xi |\ d\Sigma=\int_{\mb{S}^{n+k-1}}\mu_i(u)\ d\Sp,
\end{equation}
where $U^iN_f$, is the subset of the unit normal 
bundle of $f$ defined by
$$U^iN_f=\left\{(p,\xi)\in UN_f\ :\ \mathrm{Index}\ A_\xi=i\right\},\  \ 0\leq i\leq n.$$

The $(0,4)$-Riemann curvature tensor $R$ of $M^n$ is related to the 
second fundamental form of $f$ via the Gauss equation 
$$R(X,Y,Z,W)=\langle \alpha(X,Z),\alpha(Y,W)\rangle-
\langle\alpha(X,W),\alpha(Y,Z)\rangle,\ \ X,Y,Z,W\in TM.$$ 
In terms of the Kulkarni-Nomizu product, the Gauss equation is written equivalently as 
$$
R=\frac{1}{2}\alpha\varowedge\alpha.
$$
On the other hand, $R$ decomposes as
$$
R=\mathcal{W}+\Sh\varowedge g,
$$
where $\W$ is the {\it Weyl tensor} and
$$\Sh=\frac{1}{n-2}\Big(\Ric-\frac{\scal}{2(n-1)}g\Big).$$
is the Schouten tensor of $M^n$.

We are now able to present the proofs of our results.

\smallskip

\noindent\emph{Proof of Theorem \ref{MainTh2}:} 
Let $f\colon M^n\rightarrow \R^{n+k}$ be an isometric immersion with second fundamental 
form $\alpha$ and shape operator $A_\xi$ with respect to $\xi$, where $(p,\xi)\in UN_f$. 
Using the Gauss equation
and Proposition \ref{CrProp2} we have
 $$\Big(\Big\Vert R-\frac{\scal}{n(n-1)} R_1\Big\Vert^2+\big(S-\delta n^2H^2\big)_+^2\Big)^{n/4}(p)
 \geq (\varepsilon(n,k,\delta))^{n/4} \int_{\Lambda(\alpha(p))}\vert\det A_\xi \vert\ dV_\xi$$
 for all $p\in M^n$.
  Integrating over $M^n$ and using (\ref{ShXu}), we obtain 
\begin{equation}\label{32m32} 
\int_{M^n} \Big(\Big\Vert R-\frac{\scal}{n(n-1)}R_1\Big\Vert^2+\big(S-\delta n^2H^2\big)_+^{2}\Big)^{n/4} dM
\geq(\varepsilon(n,k,\delta))^{n/4} \Vol(\Sp^{n+k-1})\sum_{i=k}^{n-k} \tau_i(f). 
\end{equation}
Observe that 
$$\Big(\Big\Vert R-\frac{\scal}{n(n-1)}R_1\Big\Vert^2+(S-\delta n^2H^2)_+^{2}\Big)^{n/4}(p)\leq$$ 
\be\label{4965}
2^{(n-4)/4} \Big(\Big\Vert R-\frac{\scal}{n(n-1)}R_1\Big\Vert^{n/2}+\big(S-\delta n^2H^2\big)_+^{n/2}\Big)(p)
\ee
for all $p\in M^n$. Thus, from (\ref{32m32}) and (\ref{TC}) we obtain 
$$
\int_{M^n} \Big\Vert R-\frac{\scal}{n(n-1)}R_1\Big\Vert^{n/2} dM+ \int_{M^n} \big(S-\delta n^2H^2\big)_+^{n/2}\  dM
\geq c(n,\delta)\sum_{i=k}^{n-k} \tau_i(f)
$$
\begin{equation}\label{57667}
\geq c(n,\delta)\sum_{i=k}^{n-k} \beta_i(M;\F),
\end{equation}
 where 
$$c(n,\delta)=\mathrm{min}_{2\leq k\leq n/2}\left\{2\Big(\frac{\varepsilon(n,k,\delta)}{2}\Big)^{n/4}\Vol(\Sp^{n+k-1})\right\}.$$ 

Now, assume that
$$\int_{M^n}\Big\Vert R-\frac{\scal}{n(n-1)}R_1\Big\Vert^{n/2} dM+ 
\int_{M^n} \big(S-\delta n^2H^2\big)_+^{n/2}\  dM<c(n,\delta).$$ 
Then it follows directly from $(\ref{57667})$ that $\sum_{i=k}^{n-k} \tau_i(f)<1.$ Thus, there exists $u\in \Sp^{n-k-1}$ such that the height function $h_u\colon M^n\rightarrow \R$ is a Morse function whose number of critical points of index $i$ satisfies $\mu_i(u)=0$ for any $k\leq i\leq n-k$. The fundamental theorem of Morse theory (cf. \cite[Theorem 3.5]{Milnor} or \cite[Theorem 4.10]{CE}) then implies that $M^n$ has the 
homotopy type of a CW-complex with no cells of dimension $i$ for $k\leq i\leq n-k$. 

Now, if $k=2$, there will be no $2$-cells and thus by the cellular approximation theorem we conclude that the inclusion 
of the $1$-skeleton $\mathrm{X}^{(1)}\hookrightarrow M^n$ induces isomorphism between 
the fundamental groups. Therefore, the fundamental group $\pi_1(M^n)$ is a free group on 
$\beta_1(M^n;\Z)$ elements and $H_1(M^n;\Z)$ is a free abelian group on $\beta_1(M^n;\Z)$ generators. 
In particular, if $\pi_1(M^n)$ is finite, then $\pi_1(M^n)=0$ and hence $H_1(M^n;\Z)=0$. 
From Poincar\'e duality and the universal coefficient theorem it follows that $H_{n-1}(M^n;\Z)=0$. 
Thus, $M^n$ is a simply connected homology sphere and hence a homotopy sphere. By the generalized 
Poincar\'e conjecture (Smale $n\geq 5$, Freedman $n=4$) we deduce that $M^n$ is homeomorphic to $\Sp^n$.

 If the scalar curvature is everywhere non-positive, then from the Gauss equation we obtain 
 $S\geq \delta n^2 H^2$. Using Proposition \ref{CrProp2} and the Gauss equation 
 we have
$$\Big(\Big\Vert R-\frac{\scal}{n(n-1)}R_1\Big\Vert^2+\big(S-\delta n^2H^2\big)^2\Big)^{n/4}(p)
\geq (\varepsilon(n,k,\delta))^{n/4} \int_{\Sp_p^{k-1}}\vert\det A_\xi \vert\ dV_\xi$$
 for all $p\in M^n$.  Integrating over $M^n$, we obtain 
$$
\int_{M^n} \Big(\Big\Vert R-\frac{\scal}{n(n-1)}R_1\Big\Vert^2+\big(S-\delta n^2H^2\big)^{2}\Big)^{n/4} dM
\geq (\varepsilon(n,k,\delta))^{n/4} \int_{UN_f}|\det A_\xi| d\Sigma.$$
Bearing in mind the definition of the total absolute curvature $\tau(f)$ of $f$ and (\ref{4965}), we have 
$$
\int_{M^n}\Big\Vert R-\frac{\scal}{n(n-1)}R_1\Big\Vert^{n/2}\ dM+\int_{M^n}\big(S-\delta n^2H^2\big)^{n/2} dM
$$
\begin{equation}\label{0987}
 \geq 2(\varepsilon(n,k,\delta)/2)^{n/4}\Vol(\Sp^{n+k-1})\tau(f)\geq c(n,\delta)\tau(f).
\end{equation}
The desired inequality follows from Theorem \ref{ChLa}.

 If the scalar curvature is everywhere non-positive and 
 $$\int_{M^n}\Big\Vert R-\frac{\scal}{n(n-1)}R_1 \Big\Vert^{n/2}\  dM
 +\int_{M^n} \big(S-\delta n^2H^2\big)^{n/2}\ dM < 3c(n,\delta)$$ then from (\ref{0987}) 
 we obtain $\tau(f)<3$. This implies that there exists a height function which is a Morse 
 function with exactly two critical points. Reeb's theorem then implies that $M^n$ is homeomorphic to $\Sp^n$.  \qed

\smallskip

\noindent\emph{Proof of Corollary \ref{pinched3}:} 
Our assumptions and Theorem \ref{MainTh2} imply that $\beta_i(M^n;\F)=0,\ k\leq i\leq n-k$. Hence, 
$f$ is $\delta$-pinched and the rest of the proof follows from Theorem \ref{MainTh2}. Moreover, 
if the mean curvature is everywhere positive and $f$ is $1/(n-1)$-pinched, then 
a result due to Andrews and Baker \cite{Andrews} implies 
that $M^n$ is diffeomorphic to $\Sp^n$.
\qed

\smallskip

\noindent\emph{Proof of Corollary \ref{Cor1}:} 
Our assumptions and Theorem \ref{MainTh2} imply
$R=\big(\scal/n(n-1)\big)R_1.$ It follows from Shur's lemma that $M^n$ is a space form. According to a result due to
 Chern, Otsuki and Kuiper (cf. \cite[Corollary 4.8]{KN}) the sectional curvature must be positive. 
 Appealing to Moore \cite[Proposition 4]{JDM1}, $M^n$ is isometric to a constant curvature sphere.\qed

\smallskip

\noindent\emph{Proof of Corollary \ref{ThmMin2}:} 
We consider the immersion $\widetilde{f}=i\circ f$, where $i\colon \Sp^{n+k-1}\hookrightarrow\R^{n+k}$ is the totally umbilic inclusion and the proof follows 
directly from Theorem \ref{MainTh2}. \qed

\smallskip

\noindent\emph{Proof of Theorem \ref{MainTh}:} 
Let $f\colon (M^n,g)\rightarrow \mathbb{R}^{n+k}$ be a conformal immersion with second
fundamental form $\alpha$ and shape operator $A_\xi$ with respect to $\xi$, where $(p,\xi)\in UN_f$. Using the Gauss 
equation, it follows that the Weyl tensor $\mathcal{W}_{\tilde{g}}$ with respect to the induced metric $\tilde{g}$ of $f$ is given by 
$\mathcal{W}_{\tilde{g}}(p)=\mathsf{W}(\alpha(p)).$
From Proposition \ref{CrProp}, we have 
 $$\Big(\Vert\W_{\tilde{g}}\Vert^{2}+\big(S-\delta n^2H^2\big)_+^{2}\Big)^{n/4}(p)
\geq (\varepsilon_1(n,k,\delta))^{n/4}\int_{\Omega(\alpha(p))}|\det A_\xi| dV_\xi$$ for all $p\in M^n$. 
By integrating over $M^n$ and using (\ref{ShXu}), we obtain 
\begin{equation}\label{MTh1}
\int_{M^n} \Big(\Vert\W_{\tilde{g}}\Vert^{2}+\big(S-\delta n^2H^2\big)_+^{2}\Big)^{n/4} \ dM 
\geq (\varepsilon_1(n,k,\delta))^{n/4}\Vol(\Sp^{n+k-1})\sum_{i=k+1}^{n-k-1} \tau_i(f).
\end{equation}
Observe that 
$$
\Big(\Vert\W_{\tilde{g}}\Vert^{2}+\big(S-\delta n^2H^2\big)_+^{2}\Big)^{n/4}(p)
\leq 2^{(n-4)/4} \left( \Vert\W_{\tilde{g}}\Vert^{n/2}+\big(S-\delta n^2H^2\big)_+^{n/2}\right)(p)
$$
for all $p\in M^n$. 
Thus, from (\ref{MTh1}), (\ref{TC}) and the fact that the $L^{n/2}$-norm of the Weyl tensor 
is conformally invariant,  we have that
\begin{equation}\label{MTh3}
\int_{M^n}\Vert \W \Vert^{n/2} dM+ \int_{M^n} \big(S-\delta n^2H^2\big)_+^{n/2}  dM
\geq c_1(n,\delta)\sum_{i=k+1}^{n-k-1}\tau_i(f)\geq c_1(n,\delta)\sum_{i=k+1}^{n-k-1} \beta_i(M^n;\F),
\end{equation}
where 
$$c_1(n,\delta)=\mathrm{min}_{2\leq k\leq [(n-2)/2]}\left\{2\Big(\frac{\varepsilon_1(n,k,\delta)}{2}\Big)^{n/4}\Vol(\Sp^{n+k-1})\right\}.$$

Now, assume that
$$\int_{M^n}\Vert \mathcal{W} \Vert^{n/2}\ dM+ \int_{M^n}  \big(S-\delta n^2H^2\big)_+^{n/2}\  dM<c_1(n,\delta).$$ Then it follows from (\ref{MTh3}) that $\sum_{i=k+1}^{n-k-1} \tau_i(f)<1.$ Thus, there exists $u\in \Sp^{n+k-1}$ such that the height function $h_u\colon M^n\rightarrow \R$ is a Morse function whose number of critical points of index $i$ satisfies $\mu_i(u)=0$ for any $k<i<n-k$. The fundamental theorem of Morse theory then implies that $M^n$ has the homotopy type of a CW-complex with no cells of dimension $i$ for $k<i<n-k$. \qed

\begin{bibdiv}

\begin{biblist}

\bib{Andrews}{article}{
   author={Andrews, B.},
   author={Baker, C.},
   title={Mean curvature flow of pinched submanifolds to spheres},
   journal={J. Differential Geom.},
   volume={85},
   date={2010},
   number={3},
   pages={357--395},
   issn={0022-040X},
   review={\MR{2739807}},
}

\bib{Ar}{article}{
   author={Ara{\'u}jo, K.O.},
   author={Tenenblat, K.},
   title={On submanifolds with parallel mean curvature vector},
   journal={Kodai Math. J.},
   volume={32},
   date={2009},
   number={1},
   pages={59--76},
   issn={0386-5991},
   review={\MR{2518554}},
   doi={10.2996/kmj/1238594546},
}

\bib{Barbosa}{article}{
   author={Barbosa, E.R.},
   author={Ara{\'u}jo, K.O.},
   title={On complete submanifolds with bounded mean curvature},
   journal={J. Geom. Phys.},
   volume={61},
   date={2011},
   number={10},
   pages={1957--1964},
   issn={0393-0440},
   review={\MR{2822462}},
   doi={10.1016/j.geomphys.2011.05.005},
}

\bib{ECartan}{article}{
   author={Cartan, E.},
   title={La d\'eformation des hypersurfaces dans l'espace conforme r\'eel
   \`a $n \ge 5$ dimensions},
   language={French},
   journal={Bull. Soc. Math. France},
   volume={45},
   date={1917},
   pages={57--121},
   issn={0037-9484},
   review={\MR{1504762}},
}

\bib{CE}{book}{
   author={Cheeger, J.},
   author={Ebin, D.G.},
   title={Comparison theorems in Riemannian geometry},
   note={North-Holland Mathematical Library, Vol. 9},
   publisher={North-Holland Publishing Co., Amsterdam-Oxford; American
   Elsevier Publishing Co., Inc., New York},
   date={1975},
   pages={viii+174},
   review={\MR{0458335}},
}

\bib{BYChen}{article}{
   author={Chen, B.Y.},
   author={Okumura, M.},
   title={Scalar curvature, inequality and submanifold},
   journal={Proc. Amer. Math. Soc.},
   volume={38},
   date={1973},
   pages={605--608},
   issn={0002-9939},
   review={\MR{0343217}},
}

\bib{Cheng}{article}{
   author={Cheng, Q.M.},
   title={Topology and geometry of complete submanifolds in Euclidean
   spaces},
   conference={
      title={PDEs, submanifolds and affine differential geometry},
   },
   book={
      series={Banach Center Publ.},
      volume={69},
      publisher={Polish Acad. Sci., Warsaw},
   },
   date={2005},
   pages={67--80},
   review={\MR{2188755}},
   doi={10.4064/bc69-0-3},
}

\bib{ChengNona}{article}{
   author={Cheng, Q.M.},
   author={Nonaka, K.},
   title={Complete submanifolds in Euclidean spaces with parallel mean
   curvature vector},
   journal={Manuscripta Math.},
   volume={105},
   date={2001},
   number={3},
   pages={353--366},
   issn={0025-2611},
   review={\MR{1856616}},
   doi={10.1007/s002290100186},
}

\bib{ChDoKo}{article}{
   author={Chern, S.S.},
   author={do Carmo, M.},
   author={Kobayashi, S.},
   title={Minimal submanifolds of a sphere with second fundamental form of
   constant length},
   conference={
      title={Functional Analysis and Related Fields},
      address={Proc. Conf. for M. Stone, Univ. Chicago, Chicago, Ill.},
      date={1968},
   },
   book={
      publisher={Springer, New York},
   },
   date={1970},
   pages={59--75},
   review={\MR{0273546}},
}

\bib{CK}{article}{
   author={Chern, S.S.},
   author={Kuiper, N.H.},
   title={Some theorems on the isometric imbedding of compact Riemann
   manifolds in euclidean space},
   journal={Ann. of Math. (2)},
   volume={56},
   date={1952},
   pages={422--430},
   issn={0003-486X},
   review={\MR{0050962}},
}

\bib{CL1}{article}{
   author={Chern, S.S.},
   author={Lashof, R.K.},
   title={On the total curvature of immersed manifolds},
   journal={Amer. J. Math.},
   volume={79},
   date={1957},
   pages={306--318},
   issn={0002-9327},
   review={\MR{0084811}},
}

\bib{CL2}{article}{
   author={Chern, S.S.},
   author={Lashof, R.K.},
   title={On the total curvature of immersed manifolds. II},
   journal={Michigan Math. J.},
   volume={5},
   date={1958},
   pages={5--12},
   issn={0026-2285},
   review={\MR{0097834}},
}

\bib{MMF}{article}{
   author={do Carmo, M.},
   author={Dajczer, M.},
   author={Mercuri, F.},
   title={Compact conformally flat hypersurfaces},
   journal={Trans. Amer. Math. Soc.},
   volume={288},
   date={1985},
   number={1},
   pages={189--203},
   issn={0002-9947},
   review={\MR{773056}},
   doi={10.2307/2000435},
}

\bib{Kuiper}{article}{
   author={Kuiper, N.H.},
   title={Minimal total absolute curvature for immersions},
   journal={Invent. Math.},
   volume={10},
   date={1970},
   pages={209--238},
   issn={0020-9910},
   review={\MR{0267597}},
}

\bib{KN}{book}{
   author={Kobayashi, S.},
   author={Nomizu, K.},
   title={Foundations of differential geometry. Vol. II},
   series={Interscience Tracts in Pure and Applied Mathematics, No. 15 Vol.
   II },
   publisher={Interscience Publishers John Wiley \& Sons, Inc., New
   York-London-Sydney},
   date={1969},
   pages={xv+470},
   review={\MR{0238225}},
}

\bib{Leung}{article}{
   author={Leung, P.F.},
   title={Minimal submanifolds in a sphere},
   journal={Math. Z.},
   volume={183},
   date={1983},
   number={1},
   pages={75--86},
   issn={0025-5874},
   review={\MR{701359}},
   doi={10.1007/BF01187216},
}

\bib{Milnor}{book}{
   author={Milnor, J.},
   title={Morse theory},
   series={Based on lecture notes by M. Spivak and R. Wells. Annals of
   Mathematics Studies, No. 51},
   publisher={Princeton University Press, Princeton, N.J.},
   date={1963},
   pages={vi+153},
   review={\MR{0163331}},
}

\bib{JDM1}{article}{
   author={Moore, J.D.},
   title={Submanifolds of constant positive curvature. I},
   journal={Duke Math. J.},
   volume={44},
   date={1977},
   number={2},
   pages={449--484},
   issn={0012-7094},
   review={\MR{0438256}},
}

\bib{JDM2}{article}{
   author={Moore, J.D.},
   title={Conformally flat submanifolds of Euclidean space},
   journal={Math. Ann.},
   volume={225},
   date={1977},
   number={1},
   pages={89--97},
   issn={0025-5831},
   review={\MR{0431046}},
}

\bib{JDM3}{article}{
   author={Moore, J.D.},
   title={Codimension two submanifolds of positive curvature},
   journal={Proc. Amer. Math. Soc.},
   volume={70},
   date={1978},
   number={1},
   pages={72--74},
   issn={0002-9939},
   review={\MR{487560}},
   doi={10.2307/2042587},
}

\bib{JDM4}{article}{
   author={Moore, J.D.},
   title={Euler characters and submanifolds of constant positive curvature},
   journal={Trans. Amer. Math. Soc.},
   volume={354},
   date={2002},
   number={9},
   pages={3815--3834 (electronic)},
   issn={0002-9947},
   review={\MR{1911523}},
   doi={10.1090/S0002-9947-02-03043-X},
}

\bibitem{OV}
 C.R. Onti and Th. Vlachos, \emph{Almost conformally flat hypersurfaces}, 
 Arxiv e-prints (2016), available at http://arxiv.org/abs/1610.07349

\bib{Otsuki}{article}{
   author={Otsuki, T.},
   title={On the existence of solutions of a system of quadratic equations
   and its geometrical application},
   journal={Proc. Japan Acad.},
   volume={29},
   date={1953},
   pages={99--100},
   issn={0021-4280},
   review={\MR{0060281}},
}

\bib{Shiohama}{article}{
   author={Shiohama, K.},
   title={Sphere theorems},
   conference={
      title={Handbook of differential geometry, Vol. I},
   },
   book={
      publisher={North-Holland, Amsterdam},
   },
   date={2000},
   pages={865--903},
   review={\MR{1736859}},
   doi={10.1016/S1874-5741(00)80011-4},
}

\bib{SX}{article}{
   author={Shiohama, K.},
   author={Xu, H.},
   title={Lower bound for $L^{n/2}$ curvature norm and its application},
   journal={J. Geom. Anal.},
   volume={7},
   date={1997},
   number={3},
   pages={377--386},
   issn={1050-6926},
   review={\MR{1674797}},
   doi={10.1007/BF02921626},
}

\bib{ShXu1}{article}{
   author={Shiohama, K.},
   author={Xu, H.},
   title={Rigidity and sphere theorems for submanifolds},
   journal={Kyushu J. Math.},
   volume={48},
   date={1994},
   number={2},
   pages={291--306},
   issn={1340-6116},
   review={\MR{1294532}},
   doi={10.2206/kyushujm.48.291},
}

\bib{ShXu2}{article}{
   author={Shiohama, K.},
   author={Xu, H.},
   title={Rigidity and sphere theorems for submanifolds. II},
   journal={Kyushu J. Math.},
   volume={54},
   date={2000},
   number={1},
   pages={103--109},
   issn={1340-6116},
   review={\MR{1762795}},
   doi={10.2206/kyushujm.54.103},
}

\bib{simons}{article}{
   author={Simons, J.},
   title={Minimal varieties in riemannian manifolds},
   journal={Ann. of Math. (2)},
   volume={88},
   date={1968},
   pages={62--105},
   issn={0003-486X},
   review={\MR{0233295}},
}

\bibitem{Knut}
 K. Smoczyk, \emph{Mean Curvature Flow in higher codimension: introduction and survey.} Global differential geometry, Springer Proc. Math., vol. 17, Springer, Heidelberg, 2012,  pp. 231--274

\bib{Vlachos}{article}{
   author={Vlachos, Th.},
   title={Integral curvature and topological obstructions for submanifolds},
   journal={Geom. Dedicata},
   volume={166},
   date={2013},
   pages={289--305},
   issn={0046-5755},
   review={\MR{3101171}},
   doi={10.1007/s10711-012-9796-z},
}

\end{biblist}
\end{bibdiv}

{\renewcommand{\baselinestretch}{1}
\hspace*{-20ex}\begin{tabbing} \indent\={\small\textsc{Department of Mathematics, University of Ioannina, 45110 Ioannina, Greece}} \\
\> {\small {\sc E-mail adrresses:} \texttt{tvlachos@uoi.gr, chonti@cc.uoi.gr}}
\end{tabbing}}

\end{document}